\documentclass[12pt]{amsart}
\usepackage{amsmath, amscd, amssymb, amsthm,amsfonts}
\usepackage[nohug]{diagrams}
\usepackage[T1]{fontenc}
\makeatletter

\usepackage{dbnsymb}
\usepackage{mthsymb7}
\usepackage{stmaryrd}

\headheight=7pt \topmargin=4pt \textheight=604pt \textwidth=432pt
\oddsidemargin=18pt \evensidemargin=18pt \flushbottom

\setlength{\parindent}{0pt}
\setlength{\parskip}{6pt plus3pt minus3pt}
\setlength{\mathsurround}{1pt}

\ifx\pdfoutput\undefined
\usepackage{graphicx}
\else

\usepackage[pdftex]{graphicx}
\usepackage{epstopdf}
\fi

\DeclareMathOperator{\Der}{D}
\DeclareMathOperator{\perf}{perf}

\DeclareMathOperator{\Ho}{Ho}
\DeclareMathOperator{\tr}{Tr}

\DeclareMathOperator{\Cob}{Cob}
\DeclareMathOperator{\PCob}{Pre-Cob}

\DeclareMathOperator{\Tr}{Tr}

\DeclareMathOperator{\Spec}{Spec}

\DeclareMathOperator{\Kom}{Kom}
\DeclareMathOperator{\Mat}{Mat}

\DeclareMathOperator{\Cone}{Cone}

\DeclareMathOperator{\TL}{TL}
\DeclareMathOperator{\TV}{TV}

\DeclareMathOperator{\K}{K}

\DeclareMathOperator{\SU}{SU}
\DeclareMathOperator{\SO}{SO}

\DeclareMathOperator{\Ktheory}{K}

\newcommand{\inp}[1]{\ensuremath{\langle #1 \rangle}}
\newcommand{\inpp}[1]{\ensuremath{\langle\langle #1 \rangle\rangle}}
\newcommand{\module}[0]{\operatorname{-mod}}

\newcommand{\cal}[1]{\ensuremath{\mathcal{#1}}}

\newcommand{\normaltext}[1]{\textnormal{#1}}

\DeclareMathOperator{\Id}{Id}

\newcommand{\CPicTailbox}[1]{\Bigg[\CPic{#1}\hspace{-.04in}\Bigg]}
\newcommand{\CPPicTailbox}[1]{\Bigg[\hspace{-.0in}\CPPic{#1}\hspace{-.04in}\Bigg]}

\newcommand{\CPic}[1]{
\begin{minipage}{.45in}
\includegraphics[scale=.75]{#1}
\end{minipage}
}

\newcommand{\CPPic}[1]{
\begin{minipage}{.8in}
\includegraphics[scale=1.1]{#1}
\end{minipage}
}

\newcommand{\MPic}[1]{
\begin{minipage}{.35in}
\includegraphics[scale=.45]{#1}
\end{minipage}
}

\newcommand{\BPic}[1]{
\begin{minipage}{1in}
\includegraphics[scale=1.5]{#1}
\end{minipage}
}

\theoremstyle{plain}
\newtheorem{theorem}[subsection]{Theorem}

\newtheorem{proposition}[subsection]{Proposition}

\newtheorem{lemma}[subsection]{Lemma}
\theoremstyle{remark}

\newtheorem*{remark}{Remark}
\theoremstyle{definition}

\newtheorem{definition}[subsection]{Definition}

\numberwithin{equation}{section}

\begin{document}
\title[Handle slides and Localizations of Categories]{Handle slides and Localizations of Categories}

\author[Benjamin Cooper and Vyacheslav Krushkal]{Benjamin Cooper and Vyacheslav Krushkal}

\address{Department of Mathematics, University of Virginia, Charlottesville, VA 22904}
\email{bjc4n\char 64 virginia.edu, krushkal\char 64 virginia.edu}

\begin{abstract}
We propose a means by which some categorifications can be evaluated at a root of
unity. This is implemented using a suitable localization in the context of
prior work by the authors on categorification of the Jones-Wenzl
projectors.  Within this construction we define objects, invariant under
handle slides, which decategorify to the $\SU(2)$ quantum invariants at low
levels.
\end{abstract}

\maketitle

\section{Introduction}

Constructions of invariants of 3-manifolds associated to Chern-Simons theory
rely on the specialization of a parameter $q$ at a root of unity \cite{RT,
  TV}. In Khovanov's categorification of the Jones polynomial \cite{Kh} the
variable $q$ is represented by an extra grading. Understanding how to
``set'' this grading to a root of unity has been a difficult problem in the
program to derive invariants of 3-manifolds from categorifications of knot
polynomials. In this paper we propose a means by which such
categorifications can be evaluated at a root of unity. This proposal is
motivated in section \ref{handleslide} by a construction of objects which
are invariant under handle slides.

In the Temperley-Lieb algebra the evaluation of $q$ at a root of unity is
closely related to taking a suitable quotient by the Jones-Wenzl projector
$p_N$.  In \cite{CK} the authors constructed chain complexes $P_N$ within
the universal categorification of the Jones polynomial (see \cite{KH, DBN})
that become Jones-Wenzl projectors in the image of the Grothendieck
group. These chain complexes are uniquely characterized up to homotopy by
the defining properties of the Jones-Wenzl projectors $p_N$. In this paper
we consider a quotient of the homotopy category of chain complexes by the
projector $P_N$. The Verdier quotient classically considered in homological
algebra turns out to be unsuitable in this context (see section
\ref{verdier}). We propose a different notion in section \ref{cosetcon}
called the coset construction. In this context we define objects which
categorify Reshetikhin-Turaev invariants at low levels.

We will now outline a few more details of the construction.  The
Temperley-Lieb algebra has coefficients in $\mathbb{Z}[q,q^{-1}]$.  The
evaluation of a polynomial $f(q)$ at a root of unity may be implemented
algebraically by passing to the quotient $\pi(f)$ where $\pi : \mathbb{Z}[q]
\to \mathbb{Z}[q]/(\varphi_p(q))$ is the quotient map and $\varphi_p(q)$ is
the $p$th cyclotomic polynomial. For example, when $p$ is a prime number
$$\varphi_p(q) = q^{p-1} + \cdots + q + 1.$$

Given an element $e\in \TL$ such that $\varphi_p(q) | \Tr(e)$, the ideal
generated by $\varphi_p(q)$ contains the ideal generated by
$\Tr(e)$. Therefore the quotient obtained by setting $q$ to be a $p$th root
of unity factors through the quotient of $\TL$ by $e$:

$$ \TL \longrightarrow  \TL/\inp{e}  \longrightarrow \TL/\inp{\varphi_p(q)}.$$

When $N=p-1$ the Jones-Wenzl projectors $p_N \in \TL_N$ provide a choice of
$e$.  In particular, when $p$ is a prime number the trace of the $N$th
projector $\Tr(p_{N})$ is given by the quantum integer $[p]$ and

$$q^{p-1} [p] = q^{2(p-1)} + \cdots + q^2 + 1 = \varphi_p(q)\varphi_p(-q).$$

Since the graded Euler characteristic of the categorified projector agrees
with the Jones-Wenzl projector,  if a quotient $\Ho(\Kom)/(P_{N})$ could be defined it would be closely
related a desired evaluation of $\Ho(\Kom)$ at a root of unity. In this
paper we prove that there is an interesting quotient of this form and
motivate the claim that this is sufficient for topological considerations by
identifying structures which are familiar from low dimensional topology.

Different approaches to categorification of $3$-manifold invariants have been proposed in \cite{MW, Rozansky1}. These authors consider $3$-manifold invariants in different contexts, not involving evaluation at a root of unity.

Sections \ref{TL spaces} and \ref{categorified TL section} review
constructions of $SU(2)$ TQFTs and the relevant material on categorification
from \cite{DBN} and \cite{CK}. We introduce a filtration
on coefficients corresponding to the level. The emphasis in both TQFT and categorified
settings is on the Jones-Wenzl projectors.
In section \ref{cosetcon} we define a suitable version of localization.
The unusual nature of
this construction is motivated in section \ref{verdier} by examining the
structure of Verdier localization. Section \ref{handleslide}
defines objects $\Omega$ which are invariant under handle slides.

\section{Temperley-Lieb Spaces and a construction of $\SU(2)$ TQFTs}\label{TL spaces}

This section summarizes parts of the constructions of $SU(2)$ quantum invariants which are relevant in this paper, the Turaev-Viro theory
associated
to a surface and the Reshetikhin-Turaev invariant of $3$-manifolds based on a surgery presentation. The general outline presented here
follows the ``picture TQFT'' approach of \cite{FNWW, Walker}. Our exposition differs from those available in the literature in the choice
of the coefficient ring, and the main point of this section is contained in \ref{arelcyc} which shows that a version of the TQFT may be defined using the Jones-Wenzl
projectors, without a reference to a root of unity.

A compact oriented surface $\Sigma$ with boundary is \emph{labelled} when
paired with a map $\phi : \partial\Sigma \to \coprod (S^1, \hat{x})$ which
identifies each boundary circle with a model circle containing a fixed
set of marked points $\hat{x}$.

\begin{definition}\label{TL def}
The \emph{Temperley-Lieb space} $\TL(\Sigma)$ of a labelled surface $\Sigma$
is the set of all $\mathbb{Z}[q,q^{-1}]$ linear combinations of isotopy
classes of 1-manifolds (``multi-curves'') $F \subset \Sigma$ intersecting $\partial \Sigma$ transversely at marked points and subject to the local relation: removing a simple closed curve bounding a disk in $\Sigma$ from a multi-curve is equivalent to multiplying the resulting element by
the quantum integer $[2] = q + q^{-1}$.
\end{definition}

$\TL(\Sigma)$ is a 2-dimensional version of the Kauffman skein module of
${\Sigma}\times I$.  The \emph{Temperley-Lieb algebra} $\TL_n$ is given by
$\TL(D^2, \phi)$ where $\phi$ identifies $\partial D^2$ with a circle
containing $2n$ marked points. If we view $D^2$ as a rectangle with $n$
marked points on top and $n$ marked points on the bottom then vertical stacking
corresponds to the usual algebra structure on $\TL_n$.  The
construction of $\TL(\Sigma)$ is compatible with gluing surfaces along
common labels. In particular, the restriction to genus zero surfaces
 gives the structure of a planar algebra.

\subsection{The Jones-Wenzl Projectors} \label{jwproj}

For each $n \geq 1$ the Temperley-Lieb algebra $\TL_n$ contains special
idempotent elements called the \emph{Jones-Wenzl projectors}.  If we depict
elements of $\TL_n$ graphically with $n$ incoming and $n$ outgoing chords
then
\begin{equation} \label{projeq} \CPPic{pn} = \CPPic{pnm1parjw} - \frac{[n-1]}{[n]}\! \CPPic{pnm1sad} \end{equation}

where the first element $p_1 \in \TL_1$ is given by a single
chord. Jones-Wenzl projectors will be central to the discussion in
section \ref{arelcyc} and sections \ref{cosetcon}, \ref{handleslide}. For more detail, see
\cite{CK, KL}.

\subsection{Restricted Coefficients}\label{restr coeff}

Recall the quantum integers,
$$[n] = \frac{q^n-q^{-n}}{q-q^{-1}} = q^{n-1} + q^{n-3} + \cdots + q^{-n+3} + q^{-n+1}.$$

\begin{definition}\label{restricted ring}

Consider the power series
$$s_k = \sum_{i=0}^{\infty} (q^{(2i+1)k-1} - q^{(2i+1)k+1})$$

determined by the expansion of $1/[k]$ into positive powers of $q$. For each
$N >0$ the \emph{restricted ring}
$$R^N = \mathbb{Z}[q,q^{-1}][s_1,\ldots, s_{N}] \subset \mathbb{Z}[q^{-1}]\llbracket q \rrbracket$$

is obtained by adjoining $s_1,\ldots,s_N$ to the ring of Laurent
polynomials $\mathbb{Z}[q,q^{-1}]$.

\end{definition}

We have $R^1 = \mathbb{Z}[q,q^{-1}]$ and inclusions
$$R^1 \subset \cdots \subset R^N \subset R^{N+1} \subset \cdots \subset \mathbb{Z}[q^{-1}]\llbracket q \rrbracket.$$

\begin{definition}
Let $\Sigma$ be a labeled surface (see definition \ref{TL def}).  Recall that
$\TL(\Sigma)$ is the $\mathbb{Z}[q,q^{-1}]$ module given by the
Temperley-Lieb space. Define the \emph{$N$th restricted
  Temperley-Lieb space} by extension of coefficients,

$$\TL^N(\Sigma) = \TL(\Sigma) \otimes_{\mathbb{Z}[q,q^{-1}]} R^N.$$

\end{definition}

The restricted space $\TL^N$ is precisely the extension of $\TL$ necessary
to define the first $N$ Jones-Wenzl projectors: $p_1,p_2,\ldots,p_{N} \in
\TL^N$ (after $1/[k]$ is replaced by $s_k(q)$ in equation (\ref{projeq})).  Each
restricted space $\TL^N$ will have a categorified analogue $\Kom^N$ (see
section \ref{cateofchcx}) and these special coefficients will allow us to
control the quotient later.

We would like to consider the quotient of $\TL^N$ by the ideal $\inp{p_N}$.
Note that $1/\Tr(p_N) = 1/[N+1] \not\in R^N$, this motivates the definition
of the restricted ring $R^N$ above.

\begin{definition}\label{idealTL}
Let $Q\in \TL^N(D^2,\hat x)$. The \emph{ideal} $\inp{Q}$ generated by
$Q$ in $\TL^N(\Sigma)$ is the smallest submodule containing all elements
obtained by gluing $Q$ to $B$ where ${\Sigma}=(D^2,\hat
x)\cup({\Sigma}\smallsetminus D^2, \hat x)$ with $D^2\subset int({\Sigma})$,
and $B$ is any element of $\TL^N({\Sigma}\smallsetminus D^2,\hat x)$.
\end{definition}

\begin{figure}[h]
\begin{center}
\includegraphics[scale=.7]{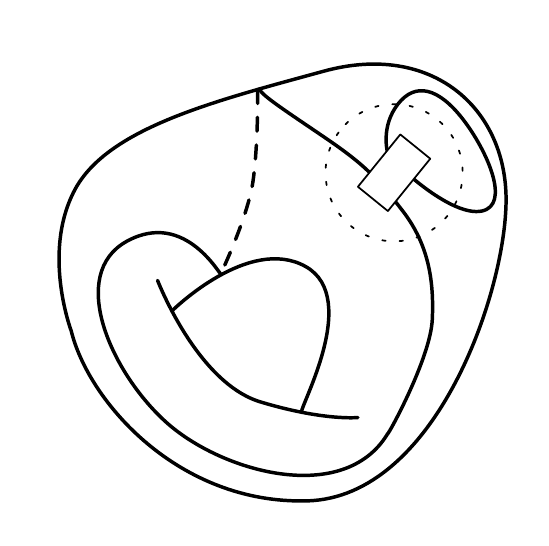}
\end{center}
\caption{An element of the ideal $\inp{Q}\subset
\TL^N(\Sigma)$ where $Q$ is represented by a box, and the surface $\Sigma$ is
a torus.}
\end{figure}
The notion of an ideal $\inp{Q}$ allows one to consider an element $Q$ of
the Temperley-Lieb algebra as a local relation ``$Q=0$'' among multi-curves
on a surface $\Sigma$, see \cite{Walker, FNWW} for more details.  The term
``ideal'' is motivated by the notion of an ideal in the planar algebra
setting (which corresponds to restricting our discussion to surfaces of
genus zero).

\subsection{Construction of $\SU(2)$ quantum invariants} \label{su2 invariants}

We start by briefly recalling a construction of the Turaev-Viro theory
\cite{TV} associated to a surface $\Sigma$, following \cite{FNWW}.  We will
then present a slight modification of the definition where the evaluation at
a root of unity is replaced by taking a suitable quotient by a Jones-Wenzl
projector.

Let $N=p-1$ and consider the Temperley-Lieb space $\overline{\TL}^N({\Sigma})$ with
coefficients in the ring $K^p =
\mathbb{Z}[q,q^{-1}]/\inp{\varphi_{p}(q^2)}$, where $\varphi_p(q)$ denotes
the $p$th cyclotomic polynomial. Then the Turaev-Viro $\SU(2)$ theory
associated to $\Sigma$, $\TV^N(\Sigma)$, is the quotient $\TL(\Sigma)$ by the ideal $\inp{p_{N}}$, where
$p_{N}$ denotes the $N$th Jones-Wenzl projector in the Temperley-Lieb
algebra. Note that our definition is somewhat different from that in \cite{FNWW},
specifically rather than setting $q$ to be a root of unity in the ground ring (which may be algebraically implemented
by taking a quotient by the cyclytomic polynomial $\varphi_p(q)$) we define the coefficient ring $K^p$ as the quotient by $\varphi_p(q^2)$.
The relation between the two is $\varphi_p(q^2) = \varphi_p(q)\varphi_p(-q)$.
The reason for our choice of the coefficient ring is explained in section \ref{arelcyc}.

The spaces $\TV^N$ suffice for the study of mapping class group
representations. Such spaces are determined in a canonical way by a
3-dimensional TQFT \cite{BHMV}. On the other hand in order to define a
3-dimensional TQFT using such spaces one must choose maps associated to the
addition of 3-dimensional handles which transform in accordance with handle slides and cancellations.
When such a choice exists it is unique up to multiplication by a
scalar \cite{C, KS}. In our context this information is determined by the
special element $\omega_N$ (see section \ref{rkh}).

For reasons explained in section \ref{categorified TL section}, it will be
convenient for us to consider {\em cosets} of the submodule $\inp{P_N}$
generated by $P_{N}$ in $\TL({\Sigma})$. Of course, the quotient
$\TL(\Sigma)/\inp{P_{N}}$ may be equivalently considered as the set of
cosets of $\inp{P_{N}}$ in $\TL(\Sigma)$. We stress this minor detail so it
is easier to follow the categorical construction in section
\ref{categorified TL section}.

\subsection{Evaluating at a root of unity by killing a projector}\label{arelcyc}

Set $N=p-1$ and recall the sequence of steps used in the definition of
$\TV^N({\Sigma})$ above: first the coefficients of $\TL({\Sigma})$ are set
to be $K^p$ (this is similar to setting the parameter $q$ to be a root of
unity), and then one takes the quotient of $\TL^N(\Sigma)$ by the submodule
$\inp{p_N}$. We will next show that in fact the first step may be omitted.

Consider the ring $K^p = \mathbb{Z}[q,q^{-1}]/\inp{\varphi_{p}(q^2)}$ as
above. Recall the restricted ring $R^N$ (definition \ref{restricted ring})
and consider the ideal $I =\inp{p_N}\subset \TL^N(S^2)=R^N$.

\begin{lemma}
Given a prime $p$, let $N=p-1$. Then

$$(R^N/I)\otimes \mathbb{Q} \cong K^p \otimes \mathbb{Q}.$$
\end{lemma}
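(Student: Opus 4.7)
The plan is to compute the ideal $I\subset R^N$ explicitly and then compare the quotients after tensoring with $\mathbb{Q}$.

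First, I would identify $I$ as a principal ideal of $R^N$. Every generator of $\inp{p_N}\subset \TL^N(S^2)=R^N$ arises by inserting $p_N$ into a small disk $D^2\subset S^2$ and filling the complementary disk with some $B\in\TL^N_N$; this produces the scalar $\Tr(B\,p_N)\in R^N$. Using the idempotence $p_N^2=p_N$ together with the defining property that $p_N\,\TL_N\,p_N\subset R^N\!\cdot p_N$ (the absorption property of the Jones-Wenzl projector), one has $\Tr(Bp_N)=\Tr(p_N B p_N)=\lambda(B)\Tr(p_N)=\lambda(B)\,[p]$. Taking $B=1$ recovers $[p]=\Tr(p_N)$ itself, so $I=([p])\subset R^N$.

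Second, I would reduce the comparison to a cyclotomic computation. Since $q$ is invertible in $R^N$ and $q^{p-1}[p]=\varphi_p(q^2)$ (as recorded in the introduction), one has $I=(\varphi_p(q^2))$. Over $\mathbb{Q}$ the relations $s_k[k]=1$ hold, hence $R^N\otimes\mathbb{Q}$ is the localization $\mathbb{Q}[q,q^{-1}]\bigl[[2]^{-1},\ldots,[N]^{-1}\bigr]$. Therefore
\[
(R^N/I)\otimes\mathbb{Q}\;\cong\;\mathbb{Q}[q,q^{-1}]\bigl[[2]^{-1},\ldots,[N]^{-1}\bigr]\big/(\varphi_p(q^2)),
\]
and it suffices to show each $[k]$ is already a unit modulo $\varphi_p(q^2)$ for $2\le k\le N=p-1$, so the localization is vacuous.

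Third, I would verify the unit property via the Chinese Remainder Theorem. For odd primes $p$ the factorization $\varphi_p(q^2)=\varphi_p(q)\varphi_p(-q)$ is into coprime irreducibles in $\mathbb{Q}[q]$, so
\[
\mathbb{Q}[q,q^{-1}]/(\varphi_p(q^2))\;\cong\;K_+\times K_-,
\]
where $K_\pm$ are the number fields in which $q$ specializes to a primitive $p$-th root of unity $\zeta_p$, respectively to $-\zeta_p$ (a primitive $2p$-th root of unity). In either factor $[k]=(q^k-q^{-k})/(q-q^{-1})$ vanishes iff $q^{2k}=1$, which fails for $1\le k<p$. Hence each $[k]$ is a unit in both factors and the right-hand side coincides with $K^p\otimes\mathbb{Q}$. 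The case $p=2$ is immediate since $N=1$ requires no inversions and $I=([2])=(\varphi_2(q^2))$ already in $R^1$.

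The main obstacle is the first step: identifying $I$ as the principal ideal generated by $[p]$ depends on the absorption property $p_N\TL_N p_N\subset R^N\cdot p_N$. This is where the restricted coefficient ring $R^N$ enters essentially, since the proportionality constant $\lambda(B)$ can involve denominators $[k]$ for $k\le N$. Once the ideal is identified, the remainder is a routine verification, requiring only care with the distinction between $\varphi_p(q)$ and $\varphi_p(q^2)$ and the range of $k$ for which $[k]$ must be inverted.
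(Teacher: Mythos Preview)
Your proof is correct and rests on the same three facts as the paper's: $[k]$ is a unit in $K^p\otimes\mathbb{Q}$ for $k\le N$, the relation $q^N\Tr(p_N)=\varphi_p(q^2)$ places $\varphi_p(q^2)$ in $I$, and every element of $I$ is a multiple of $[p]$ via the absorption property. The paper packages these as a pair of mutually inverse ring maps rather than computing both sides explicitly, and it leaves the absorption argument and the unit verification implicit; your version is more detailed (and in fact proves the sharper integral statement $I=(\varphi_p(q^2))\subset R^N$), but the approach is essentially the same.
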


\begin{proof}
Since $[k]$ is a unit in $K^p\otimes \mathbb{Q}$ the quotient map
$\mathbb{Z}[q,q^{-1}]\otimes\mathbb{Q} \to K^p \otimes \mathbb{Q}$ extends
to a map $R^N \otimes \mathbb{Q} \to K^p\otimes \mathbb{Q}$. Moreover, since
any element of $I$ evaluates to zero in $K^p \otimes
\mathbb{Q}$, this map descends to a map
$$\phi : (R^N/I) \otimes \mathbb{Q} \to K^p \otimes \mathbb{Q}.$$

On the other hand, observe that the cyclotomic polynomial $\varphi_p(q^2)$ is in the ideal $I$, since $\varphi_p(q^2) = q^N \tr(P_N)$. This gives an inverse map
$$\psi : K^p \otimes \mathbb{Q} \to (R^N/I)\otimes \mathbb{Q}.$$
\end{proof}

Setting the variable $q$ to a root of unity corresponds to quotienting the
ring of coefficients by the ideal $\inp{\varphi_p(q)}$. Since
$\inp{\varphi_p(q^2)}\subset\inp{\varphi_p(q)}$ the lemma implies

$$ \TL \longrightarrow  \TL/\inp{p_N}  \longrightarrow \TL/\inp{\varphi_p(q)}.$$

\begin{remark}
The proof of this lemma generalizes to give a ``picture TQFT'' construction
of $\SU(2)_N$ Turaev-Viro theory for a surface $\Sigma$ by killing $P_N$ in
the $\SU(2)$ Kauffman skein module of $\Sigma\times I$ over $R^N$.
\end{remark}

\subsection{Kirby Calculus and Handle Slides}\label{kirby}

It is a classical theorem of Lickorish and Wallace that any 3-manifold $M$ can be
obtained by surgery on a framed link $L \subset S^3$. Kirby calculus is a
means of describing when surgery on two different framed links produce
homeomorphic 3-manifolds:

\begin{theorem}{\cite{Kirby}}
Two 3-manifolds $M$ and $N$ obtained by surgery on $S^3$ along framed links
$L$ and $L'$ are diffeomorphic if and only if the associated link
diagrams $D(L)$ and $D(L')$ are related by a sequence of Kirby moves:
\begin{figure}[h]
$\BPic{blank} \cong \BPic{plus1framed2} \normaltext{\quad \quad and \quad} \BPic{handleslide1} \cong \BPic{handleslide2}.$
\caption{}
\end{figure}
\end{theorem}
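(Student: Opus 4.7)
The plan is to argue the two directions separately, both in terms of the 4-manifolds $X_L$ obtained by attaching 2-handles to $D^4$ along the framed components of $L$; note that $\partial X_L = M$, the surgered 3-manifold.

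For the ``if'' direction, I would check that each Kirby move leaves the boundary 3-manifold unchanged up to diffeomorphism. The handle slide move corresponds to isotoping the attaching region of one 2-handle over the belt sphere of another 2-handle inside $X_L$; this produces a diffeomorphic 4-manifold (a standard fact in Morse/handle theory), so its boundary is unchanged. The stabilization move, which adjoins an isolated $\pm 1$-framed unknot, corresponds to taking boundary connect sum $X_L \natural (\pm \mathbb{CP}^2 \setminus B^4)$; since $\partial(\pm \mathbb{CP}^2 \setminus B^4) = S^3$, the boundary 3-manifold is unchanged. This direction is essentially bookkeeping in dimension four.

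For the ``only if'' direction, the strategy is to parametrize a path between two handle decompositions of a single 4-manifold and track how the handle structure changes. Given $M_L \cong M_{L'}$ via a diffeomorphism $\phi$, I would consider the 5-dimensional cobordism built by attaching a collar to $X_L$ and identifying $\partial X_L \cong \partial X_{L'}$ via $\phi$, reducing to the situation where $X_L$ and $X_{L'}$ are two handle decompositions of the same 4-manifold $W$ relative to the boundary. Cerf's theorem on one-parameter families of Morse functions then produces a finite sequence of elementary moves between the two handlebody structures: handle slides (moving critical points past each other along gradient trajectories), handle birth/death pairs (crossing a cubic singularity in the Cerf family), and permutations of critical points. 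Translating each such move back to the link picture, handle slides of 2-handles become handle slides of link components, while birth/death of canceling (1,2)- or (2,3)-handle pairs need to be converted into $\pm 1$-framed unknot stabilizations.

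The main obstacle will be the final step: the Cerf-theoretic decomposition naturally introduces 1- and 3-handles, whereas the surgery description uses only 2-handles. I expect to need Kirby's key technical lemma showing that each 1-handle can be traded for a 0-framed 2-handle linked once by a new component (and dually for 3-handles), and then that the resulting extra 2-handles can be removed by a sequence of slides and $\pm 1$-stabilizations. Establishing this trading procedure, together with the cancellation argument that turns a canceling (1,2)-pair into a blow-up/blow-down, is the heart of the proof and would occupy the bulk of the work.
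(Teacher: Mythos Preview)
The paper does not prove this theorem; it is quoted as a classical result with the citation \cite{Kirby} and no argument is given. There is therefore nothing in the paper to compare your proposal against.

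That said, your outline is broadly in the spirit of Kirby's original argument, but there is a genuine gap in the ``only if'' direction. You write that you will ``consider the 5-dimensional cobordism built by attaching a collar to $X_L$ and identifying $\partial X_L \cong \partial X_{L'}$ via $\phi$,'' thereby reducing to two handle decompositions of a single $4$-manifold. Attaching a collar and identifying boundaries does not produce a $5$-manifold, and in general $X_L$ and $X_{L'}$ are \emph{not} diffeomorphic $4$-manifolds even when their boundaries agree. What is actually needed is the closed $4$-manifold $W = X_L \cup_{\phi} (-X_{L'})$; one must then argue that $W$ bounds a compact $5$-manifold. Since $\Omega_4^{SO}\cong\mathbb{Z}$ is detected by the signature, this requires first performing enough $\pm 1$-framed unknot stabilizations to equalize $\sigma(X_L)$ and $\sigma(X_{L'})$, after which $W$ bounds and Cerf theory can be applied to the resulting $5$-dimensional relative cobordism. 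Without this step the reduction you describe does not go through. The remainder of your plan (trading $1$- and $3$-handles for $2$-handles, then reducing to slides and stabilizations) is indeed where the technical work lies and matches Kirby's approach.
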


The first move describes the addition of a disjoint $\pm 1$-framed unknot to
the diagram.
It is the second, \emph{handle slide}, move that will be the focus of
this paper. The invariance under the first Kirby move is achieved by a suitable
normalization of the quantum invariant (cf. \cite{BHMV, Lickorish}). This is not
addressed in the present paper.

\subsection{The magic element} \label{rkh}

In \cite{RT} (see also \cite{KL}) a special
element $\omega_N$ is defined which can be represented by a linear combination of
projectors:

\begin{equation} \label{omega eq} \omega_N = \sum_{k=0}^{N}\,  \Tr(p_k) \phi_k. \end{equation}

Here $\phi_k$ is the element of the annulus category obtained by placing the
projector $p_k$ in the annulus. This $\omega_N$, sometimes referred to as
the magic linear combination of Temperley-Lieb elements, is the element
which is invariant under handle slides used in most constructions of quantum
3-manifold invariants.

\begin{lemma}
The element $\omega_N$ is invariant under handle slides in the module
$\TL^N(S^1\times I)/\inp{p_N}$.
\end{lemma}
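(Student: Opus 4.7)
The plan is to reduce the handle slide invariance to an ``encircling identity'' for $\omega_N$, and then to verify that identity modulo $\inp{p_N}$ using a sum of Hopf link values colored by Jones-Wenzl projectors.

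First I would interpret a handle slide in $\TL^N(S^1\times I)$ as the operation of moving a strand across the core of the annulus, where $\omega_N$ sits. By the skein relations, any strand passing near the core decomposes as an $R^N$-linear combination of strands colored by the projectors $p_k$, so by linearity it suffices to verify the invariance when the strand being slid carries a single projector $p_k$ for $0\leq k\leq N$. The case $k=N$ is immediate, since any such configuration lies in $\inp{p_N}$ and thus vanishes in the quotient; so the essential range is $0\leq k\leq N-1$.

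The core of the argument is the encircling identity: a circle colored by $\omega_N$ that encircles a $p_k$-colored strand is equivalent, modulo $\inp{p_N}$, to a scalar multiple $\lambda_k$ of the strand alone. To compute $\lambda_k$, I would expand $\omega_N = \sum_{j=0}^{N}\Tr(p_j)\phi_j$ as in (\ref{omega eq}) and invoke the standard eigenvalue calculation: the annular element $\phi_j$ encircling a $p_k$-strand acts by $h_{jk}/\Tr(p_k)$, where $h_{jk}$ is the Kauffman bracket of the Hopf link with components colored by $p_j$ and $p_k$. Summing over $j$ gives
\[
\lambda_k \;=\; \frac{1}{\Tr(p_k)}\sum_{j=0}^{N}\Tr(p_j)\,h_{jk}.
\]

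The handle slide is invariant exactly when $\lambda_k$ takes the value prescribed by Kirby calculus so that the slide acts trivially on each component of $\omega_N$; this is the orthogonality of rows of the quantum $S$-matrix. By Lemma \ref{arelcyc}, after tensoring with $\mathbb{Q}$ the quotient $R^N/\inp{p_N}$ becomes $K^p\otimes\mathbb{Q}$, over which the required identity reduces to the classical Verlinde formula and is standard. The main obstacle will be verifying the identity already at the integral level of $R^N/\inp{p_N}$, rather than only after rationalizing, since $R^N$ does not contain $1/[N+1]$; this requires tracking denominators carefully using the explicit form $\Tr(p_N)=[N+1]$ together with the power series $s_k$ defining the restricted ring $R^N$, so that the scalars appearing in the eigenvalue computation can be interpreted as elements of $R^N/\inp{p_N}$ before any extension of scalars.
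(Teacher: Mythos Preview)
The paper does not give a self-contained proof of this lemma: it simply cites Lickorish's argument and observes that his proof, originally stated at a root of unity, goes through verbatim in $\TL^N(S^1\times I)/\inp{p_N}$ because the only relation Lickorish uses is $p_N=0$. So the relevant comparison is between your proposal and Lickorish's proof.

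Your approach is genuinely different, and it has a real gap. Lickorish's argument does not pass through the $S$-matrix or the Verlinde formula. He reduces to sliding a single strand (since the annulus skein is generated multiplicatively by one loop), computes the effect of encircling a $p_i$-colored arc by $\phi_j$ as a Chebyshev evaluation, and then checks directly---using nothing but the recursion for the $\phi_j$ and the relation $p_N=0$---that the resulting scalar is independent of $i$. Every step is an identity in the ring where the projectors live, so the argument is valid over $R^N/\inp{p_N}$ as stated, with no denominators beyond those already in $R^N$ and no appeal to rationalization. This is exactly the point the paper is making: killing $p_N$ is \emph{already} enough, without ever mentioning a root of unity.

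Your route, by contrast, computes the eigenvalues $\lambda_k$ and then appeals to ``orthogonality of rows of the quantum $S$-matrix'' or ``the classical Verlinde formula'' to conclude they coincide. Two issues: first, the identity you need is not $S$-matrix orthogonality per se but the simpler statement that $\lambda_k$ is independent of $k$ for $0\le k\le N-1$; conflating these is imprecise. Second, and more seriously, your plan proves the identity only after tensoring with $\mathbb{Q}$ and invoking the isomorphism of Section~\ref{arelcyc}. That establishes the lemma over $(\TL^N/\inp{p_N})\otimes\mathbb{Q}$, not over $\TL^N/\inp{p_N}$ itself, and you have no argument that the quotient is torsion-free. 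You correctly flag this as ``the main obstacle,'' but ``tracking denominators carefully'' is not a proof; it is a hope. The whole thrust of the paper is that one should \emph{avoid} the detour through $K^p$ and roots of unity, so routing your argument back through $K^p\otimes\mathbb{Q}$ runs against the grain of the result. Look at Lickorish's direct computation instead: it delivers the integral statement with no extra work.
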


A beautiful proof of this lemma was given by Lickorish (see
\cite{Lickorish}) when $q$ equals a root of unity. We observe that his proof remains true
in the annulus module above, where the evaluation at a root of unity is replaced by taking a
quotient by the ideal generated by the Jones-Wenzl projector.

The Reshetikhin-Turaev invariant of a $3$-manifold $M$ is defined by
presenting $M$ as a surgery on a framed link $L$ in $S^3$, labeling each
link component by the element $\omega_N$ and evaluating the Jones polynomial
of the resulting labeled link at the corresponding root of unity,
cf. \cite{KL, Lickorish}. To relate this to the context of the
Temperley-Lieb spaces and the Turaev-Viro theory $\TV^N(\Sigma)$ associated
to surfaces, note that the Jones polynomial of a link may be viewed as the
evaluation of the link in the skein module associated to $S^2$.  The
Turaev-Viro theory discussed above is the quantum double of the
Reshetikhin-Turaev TQFT, however in the case of the $2$-sphere both theories
are $1$-dimensional. Therefore it makes sense to analyze the handle-slide
property in the module $\TL^N(S^1\times I)/\inp{p_N}$ which was defined
above in the Turaev-Viro setting.

The reader should note that the omegas introduced in section
\ref{handleslide} are not precisely the same as the special element
$\omega_N$ defined above. The two definitions are compared in section
\ref{relationtomagic}.

\subsection{Spin $3$-manifolds} \label{spin}
A refined version of $\SU(2)$ quantum invariants may be defined for a $3$-manifold with a spin structure \cite{Blanchet}.
One considers a decomposition ${\omega}={\omega}_0+{\omega}_1$, corresponding to the ${\mathbb Z}/2$ grading (parity)
of the summation indexing set in (\ref{omega eq}). Given a presentation of a $3$-manifold $M$ with a spin structure as
the surgery on a framed link $L\subset S^3$, one labels the components of the characteristic sublink by ${\omega}_1$ and the rest
of the components by ${\omega}_0$. A spin analogue of the Kirby calculus is then used \cite{Blanchet} to prove that an appropriate
normalization is an invariant of $M^3$ with a given spin structure.

\section{Categorification of Temperley-Lieb spaces} \label{categorified TL section}

In this section we recall a version of Dror Bar-Natan's graphical
formulation \cite{DBN} of the Khovanov categorification \cite{KH}. It will
be used throughout the remainder of this paper. The Bar-Natan formulation
extends to planar algebras and surfaces in a transparent way and is
essentially universal (see \cite{KhF}) and so has the advantage of allowing
our constructions to apply to a number of variant categorifications which
exist in the literature.

Let $\Sigma$ be a labelled surface. There is an additive category
$\PCob(\Sigma)$ whose objects are isotopy classes of formally $q$-graded
1-manifolds $F \subset \Sigma$ intersecting all boundary components
transversely at marked points (compare to definition \ref{TL def}).

The morphisms are given by $\mathbb{Z}[\alpha]$ linear combinations of
isotopy classes of orientable cobordisms bounded in $\Sigma\times [0,1]$
between two surfaces $\Sigma$ containing such diagrams. Each cobordism is
constant along the boundary. The \emph{degree} of a cobordism $C : q^i A \to
q^j B$ is given by $\deg(C) = \deg_t(C) + \deg_q(C)$ where the topological
degree $\deg_t(C) = \chi(C) - n$ is given by the Euler characteristic of $C$
and the $q$-degree $\deg_q(C) = j - i$ is given by the relative difference
in $q$-gradings. Homological degree is not part of the definition $\deg(C)$.

It has become a common notational shorthand to represent a handle by a dot
and a saddle by a flattened diagram containing a dark line.
\begin{figure}[h]
$\CPPic{sheetwithhandle}\! = 2\! \CPPic{sheetwithdot} = 2 \CPPic{dottedline} \textnormal{ and } \CPPic{saddle}\,\, =\!\! \CPPic{n2-1-sad}$
\caption{}
\end{figure}

(The topological degrees of the cobordisms above are $-2$, $-1$
respectively.)  When $(\Sigma,\phi) = (D^2,2n)$ we would like a category
$\mathcal{C}$ such that $\Ktheory_0(\mathcal{C}) \cong \TL_n$ so we require
that the object represented by a closed circle be isomorphic to sum of two
empty objects in degrees $\pm 1$ respectively. If such maps are to be degree
preserving then the most natural choice for these maps is given below.
\begin{figure}[h]
$\begin{diagram}
\varphi\!: \hspace{-.15in}
&\CPic{localcircle}\hspace{.1in} & \pile{\rTo^{\left( \CPic{cap-circ-to-null} \CPic{torus-circ-to-null} \right)  }\\
\lTo_{\left( \CPic{torus-null-to-circ} \CPic{cap-null-to-circ} \right)  }}
& \hspace{.1in} q^{-1}\CPic{localblank}\, \oplus\, q \CPic{localblank} &\hspace{.15in} : \! \psi
\end{diagram}
$
\caption{}
\end{figure}

In order to obtain $\varphi \circ \psi = 1$ and $\psi \circ \varphi = 1$ we
form a new category $\Cob(\Sigma)$ obtained as a quotient of the
category $\PCob(\Sigma)$ by the local relations given below.

\begin{figure}[h]
$\CPic{sphere} = 0 \hspace{.75in} \CPic{spheredot} = 1 \hspace{.75in} \CPic{spheredotdot} = 0 \hspace{.75in} \CPic{spheredotdotdot} = \alpha$
$\CPic{cyl} = \CPic{cut1} + \CPic{cut2}$
\caption{}
\end{figure}

The neck cutting relation can be applied to any compressing disk. In
particular, that this means the isomorphisms $\phi$ and $\psi$ cannot be
applied to remove a non-trivial circle from the annulus. The cylinder or
neck cutting relation implies that closed surfaces $\Sigma_g$ of genus $g >
3$ evaluates to $0$. The relations above imply that a sheet with two dots is
equal to $\alpha$ times a sheet with no dots. In what follows we will let
$\alpha$ be a free variable and absorb it into our base ring ($\Sigma_3=
8\alpha$). One can think of $\alpha$ as a deformation parameter, see
\cite{DBN} for further details.

\begin{proposition}
For any labelled surface $\Sigma$ there is an isomorphism of
$\mathbb{Z}[q,q^{-1}]$-modules,
$$\Ktheory_0(\Cob(\Sigma)) \cong \TL(\Sigma).$$
\end{proposition}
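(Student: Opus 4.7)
The plan is to construct mutually inverse $\mathbb{Z}[q,q^{-1}]$-linear maps $\Phi\co \Ktheory_0(\Cob(\Sigma))\to\TL(\Sigma)$ and $\Psi\co\TL(\Sigma)\to \Ktheory_0(\Cob(\Sigma))$. The entire content of the isomorphism is a dictionary between the circle isomorphism $\varphi$ of $\Cob(\Sigma)$ and the single defining local relation of $\TL(\Sigma)$, namely that removing a null-homotopic closed circle multiplies the corresponding $\TL$-element by $[2]=q+q^{-1}$.

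First I would define $\Psi$ on a generating multi-curve $F\subset\Sigma$ by $\Psi(F):=[F]\in\Ktheory_0(\Cob(\Sigma))$, and extend $\mathbb{Z}[q,q^{-1}]$-linearly via $q^i\Psi(F):=[q^iF]$. To check well-definedness one must verify the $\TL$-relation in $\Ktheory_0$: if $C\sqcup F'$ is a multi-curve containing a null-homotopic circle component $C$, then $[C\sqcup F']=[2]\,[F']$. Since $C$ bounds a disk $D\subset\Sigma$, apply the circle isomorphism $\varphi\co C\to q^{-1}\emptyset\oplus q\emptyset$ locally inside $D$ and extend by the identity cobordism on $F'$ outside $D$; together with distributivity of $\sqcup$ over $\oplus$ in an additive category this produces a global isomorphism $C\sqcup F'\cong q^{-1}F'\oplus qF'$ in $\Cob(\Sigma)$, hence $[C\sqcup F']=(q^{-1}+q)[F']=[2]\,[F']$ in $\Ktheory_0$, as required.

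For $\Phi$, observe that every object of $\Cob(\Sigma)$ is already a finite direct sum $\bigoplus q^{i_k}F_k$ of $q$-shifted multi-curves by additivity, and by iterating $\varphi$ on each null-homotopic component it is isomorphic to a direct sum $\bigoplus_\ell q^{j_\ell}R_\ell$ of $q$-shifted \emph{reduced} multi-curves, i.e.\ those with no null-homotopic closed component. I would set
$$\Phi\bigl(\bigl[\textstyle\bigoplus q^{j_\ell}R_\ell\bigr]\bigr):=\sum q^{j_\ell}[R_\ell]_{\TL},$$
which lies in $\TL(\Sigma)$ since reduced multi-curves form a $\mathbb{Z}[q,q^{-1}]$-basis. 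The composition $\Phi\circ\Psi=\Id$ is then immediate by tracing the definitions.

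The main obstacle is establishing that $\Phi$ is well-defined, i.e.\ that the reduced decomposition is unique up to permutation of summands. This is a Krull--Schmidt style assertion: if $\bigoplus q^{i_k}R_k\cong\bigoplus q^{j_\ell}R'_\ell$ in $\Cob(\Sigma)$ with all $R_k,R'_\ell$ reduced, then the two sides must coincide termwise up to reordering. I would prove this by analyzing the $\Morph$-spaces in $\Cob(\Sigma)$: any morphism $q^iR\to q^jR'$ is a $\mathbb{Z}[\alpha]$-combination of cobordisms of topological degree $j-i$, and successive applications of neck cutting together with the sphere, dot and dotted-sphere evaluations reduce each such cobordism to a normal form from which one reads off that no combination can be invertible unless $i=j$ and $R=R'$ as isotopy classes. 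For $\Sigma=D^2$ this is Bar-Natan's computation \cite{DBN}; for an arbitrary surface the same arguments apply since all defining relations of $\Cob(\Sigma)$ are purely local. Once uniqueness of the reduced decomposition is in hand, $\Psi\circ\Phi=\Id$ follows, and $\Phi,\Psi$ are mutually inverse $\mathbb{Z}[q,q^{-1}]$-linear isomorphisms.
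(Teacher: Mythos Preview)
Your proposal is correct and follows the same route the paper has in mind: the paper's entire proof is the sentence ``The proof follows directly from the construction above,'' meaning precisely that the circle isomorphism $\varphi$ in $\Cob(\Sigma)$ encodes the single defining relation of $\TL(\Sigma)$. You have simply unpacked this into explicit mutually inverse maps and spelled out the Krull--Schmidt step (uniqueness of the reduced decomposition via analysis of $\Morph$-spaces) that the paper leaves implicit; there is no substantive difference in approach.
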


The proof follows directly from the construction above. Note that when
$\Sigma = D^2$ this is precisely the same setting as \cite{CK} and
\cite{CHK}. For an extended discussion of Grothendieck groups in this
context see \cite{CK}.

\begin{remark}
Surfaces with common boundary labels can be glued and the construction of
$\Cob(\Sigma)$ is compatible with this gluing.
\end{remark}

\subsection{Filtered Categories and Universal Projectors}\label{cateofchcx}

\begin{definition}\label{kom def}
If $\Sigma$ is a labelled surface let $\Kom^{\infty}(\Sigma) =
\Kom(\Mat(\Cob(\Sigma)))$ be the category of chain complexes of finite
direct sums of objects in $\Cob(\Sigma)$. In $\Kom^{\infty}$ we allow chain
complexes $K_*$ of unbounded positive homological degree and bounded
negative homological degree. Let $\Kom^{\infty}(n) = \Kom^{\infty}(D^2,2n)$
denote the category associated to the disk. Let $\Kom^1(\Sigma)$ be the
category of bounded complexes.
\end{definition}

In previous work the authors showed that the category $\Kom^{\infty}(n)$
contains special objects $P_n$ which categorify the Jones-Wenzl projectors
$p_n\in \TL_n$.

\begin{theorem}{\cite{CK}}
There exists a chain complex $P_n \in \Kom^{\infty}(n)$ called the
\emph{universal projector} which satisfies

\begin{enumerate}
\item $P_n$ is positively graded with degree zero differential.
\item The identity diagram appears only in homological degree zero and only
  once.
\item The chain complex $P_*$ is contractible under turnbacks.
\end{enumerate}

These three properties guarantee that $P_n$ is unique up to homotopy, $P_n
\otimes P_n \simeq P_n$ and $\Ktheory_0(P_n) = p_n \in \TL_n$.
\end{theorem}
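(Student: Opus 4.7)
For existence I would proceed by induction on $n$, categorifying the Jones-Wenzl recursion (\ref{projeq}). The base case $P_1$ is the single strand concentrated in homological degree zero. For the inductive step, I would build $P_n$ from $P_{n-1}$ as a telescope whose zeroth stage is $P_{n-1}$ extended trivially to $n$ strands, and whose higher stages successively kill failures of contractibility under turnbacks by introducing mapping cones along saddle maps. Properties (1) and (2) are immediate from this construction: positivity of grading holds because each new stage sits in strictly higher homological degree than the previous, and the identity diagram appears only as the identity summand of the initial stage. Property (3), contractibility under turnbacks, is the inductive claim and is precisely what drives the choice of saddle maps at each stage.

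Uniqueness is the technical heart of the argument, and the step I expect to require the most care. Suppose $Q$ and $Q'$ both satisfy (1)--(3). The identity diagram in homological degree zero furnishes a canonical chain map in that degree, and I would extend it degree-by-degree: having constructed $f_{\le k}: Q^{\le k} \to Q'^{\le k}$ commuting with $d$, the obstruction to defining $f_{k+1}$ is a morphism whose source consists of non-identity diagrams in degree $k+1$. The key structural claim is that every non-identity diagram in $\Cob(D^2,2n)$ factors through a turnback, so the obstruction lies in a mapping space into $Q'$ whose target is contractible by property (3); the obstruction is therefore null-homotopic and the extension exists. A symmetric construction produces $g: Q' \to Q$, and a second obstruction argument --- using that any self-map of $Q$ vanishing on the degree-zero identity summand factors through a turnback --- shows that $gf \simeq \Id_Q$ and $fg \simeq \Id_{Q'}$. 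Making the factors-through-a-turnback claim fully precise inside $\Cob(D^2,2n)$ and tracking the homotopies coherently across degrees is where the main difficulty lies.

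The remaining claims follow formally from the characterization. For $P_n \otimes P_n \simeq P_n$, one checks that the totalization of $P_n \otimes P_n$ satisfies the three defining properties: positive grading and the appearance of the identity once in degree zero follow from bilinearity of tensor, and contractibility under turnbacks follows because a turnback absorbed into either tensor factor produces a contractible complex. Uniqueness then yields the homotopy equivalence. For the Grothendieck group statement, passing to Euler characteristics turns (1) into the fact that $\Ktheory_0(P_n)$ is a well-defined element of $\TL_n$, (2) into the statement that $\Ktheory_0(P_n) = \Id_n + (\text{other diagrams})$, and (3) into the vanishing of $\Ktheory_0(P_n)\cdot T$ for every turnback $T$. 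These are exactly the defining properties of the Jones-Wenzl projector $p_n$, so $\Ktheory_0(P_n) = p_n$.
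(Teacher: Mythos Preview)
The paper does not prove this theorem; it is quoted from \cite{CK} without argument and used as background for the constructions that follow. So there is no proof here to compare your proposal against.

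That said, your outline is essentially the strategy carried out in \cite{CK}: existence by an inductive construction that adjoins higher-degree pieces to kill turnbacks (the ``Frenkel--Khovanov sequence'' or telescope picture), and uniqueness by building maps $Q\to Q'$ degree by degree, using that every non-identity Temperley--Lieb diagram factors through a cap or cup so that the relevant obstruction spaces are contractible by property (3). The derivations of idempotence and of $\Ktheory_0(P_n)=p_n$ from the axiomatic characterization are likewise the standard ones. The one place where your sketch is loose is in the phrase ``tracking the homotopies coherently across degrees'': in \cite{CK} this is handled not by a bare degree-by-degree extension but by first proving the contractibility statement $P_n\otimes e_i\simeq 0$ and then using it to show $P_n\otimes Q\simeq P_n$ (and symmetrically) for any $Q$ satisfying the axioms, which sidesteps the need for an explicit inductive coherence argument. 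Your approach would work, but the tensor-absorption argument is cleaner.
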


Other authors \cite{FSS, Rozansky} constructed projectors using different techniques.

\begin{definition}
Given $N \in \mathbb{Z}_+$ and a labeled surface $\Sigma$, let
$\Kom^N(\Sigma)$ be the full subcategory of $\Kom^{\infty}(\Sigma)$
consisting of those chain complexes whose image under $\K_0$ is contained in
$\TL^N({\Sigma})$.
\end{definition}

By construction,
$$\Kom^1(\Sigma) \subset \cdots \subset \Kom^N(\Sigma) \subset \Kom^{N+1}(\Sigma) \subset \cdots \subset \Kom^{\infty}(\Sigma)$$

where $\Kom^1(\Sigma)$ is the category of bounded complexes and
$\Kom^{\infty}(\Sigma)$ is as in definition \ref{kom def}.

\section{Localization}\label{cosetcon}

The idea of localization is central to this paper.  We seek to carry out a
categorified analogue of the ``picture TQFT'' construction in sections
\ref{su2 invariants}, \ref{arelcyc}, and a central step in this program is
to kill $P_N$ in $\Kom^N(\Sigma)$ for all $\Sigma$. In order to accomplish
this we find a subcategory $\inpp{P_N}$ of $\Kom^N(\Sigma)$ the image of
which corresponds to the ideal $\inp{P_N}$ in the Grothendieck group
$\K_0(\Kom^N(\Sigma))\cong \TL^N(\Sigma)$.  A natural family of morphisms is
then inverted (definition \ref{localset}) in which we identify any object
$X$ in $\Kom^N(\Sigma)$ with any cone of $X$ on any object $P'\in
\inpp{P_N}$. This localization can be understood concretely (definition
\ref{coset definition}) in terms of iterated cones over the ideal
$\inpp{P_N}$. In section \ref{verdier} we explain why the usual notion of
Verdier localization is not suitable in our context, leading us to a
different version of localization described next.

In what follows, we fix $N > 0$ and use $P$ to denote $P_N$ and $\Kom$ to
denote $\Kom^N(\Sigma)$ for some $\Sigma$. Also $\Ho(\Kom)$ is the homotopy
category of $\Kom$.

\begin{definition}\label{ideal}
Given $Q$ in $\Ho(\Kom(D^2,\hat{x}))$, the \emph{ideal} $\inpp{Q} \subset \Ho(\Kom(\Sigma))$ generated by $Q$
is the smallest full subcategory of
$\Ho(\Kom({\Sigma}))$ which contains all objects obtained by gluing $Q$ to
$B$ where ${\Sigma}=(D^2,\hat x)\cup({\Sigma}\smallsetminus D^2, \hat x)$
with $D^2\subset int({\Sigma})$, $B$ is any object of
$\Ho(\Kom({\Sigma}\smallsetminus D^2,\hat x))$, and which is closed under
cones and grading shifts.
\end{definition}

The reader should compare this to definition \ref{idealTL}.
\begin{figure}[h]
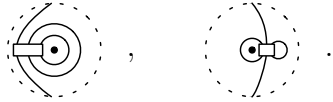

$\CPPic{omega3eq1a}, \quad \CPPic{omega3eq2a}.$
\caption{Examples of objects in the ideal $\inpp{P_3}$ in $\Kom(S^1\times
I)$, where the annulus $S^1\times I$ has two marked points on the boundary.}
\end{figure}

Notice that $A\oplus B \in \inpp{P} \not\Rightarrow A\in \inpp{P}$; the
subcategory $\inpp{P}$ is not thick, compare with section \ref{verdier}. The
definition is chosen so that the following proposition holds.

\begin{proposition}
If $\Ho(\Kom)$ and $P \in \Ho(\Kom)$ are as above, then
$$\Ktheory_0(\inpp{P}) = \inp{P}$$

where $\inp{P} \subset \Ktheory_0(\Ho(\Kom))\cong \TL^N$ is the ideal generated by $P$ (definition \ref{idealTL}).
\end{proposition}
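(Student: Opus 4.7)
The plan is to prove the two inclusions $\Ktheory_0(\inpp{P}) \subseteq \inp{P}$ and $\inp{P} \subseteq \Ktheory_0(\inpp{P})$ separately. For the first I would argue by induction on the construction of the full subcategory $\inpp{P}$. The base case is a gluing $P \cup \mathcal{B}$ with $\mathcal{B} \in \Ho(\Kom({\Sigma}\smallsetminus D^2, \hat x))$, whose $\Ktheory_0$ class is $p \cdot [\mathcal{B}]$ by compatibility of $\Ktheory_0$ with gluing; this lies in $\inp{P}$ by definition. For the closure steps, a $q$-grading shift multiplies the $\Ktheory_0$ class by $q^n$, preserving $\inp{P}$ because $\inp{P}$ is an $R^N$-submodule, and a cone $\Cone(f \co A \to B)$ fits into a distinguished triangle yielding $[\Cone(f)] = [B] - [A]$, again in $\inp{P}$ by closure under subtraction.

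For the reverse inclusion, every generator of $\inp{P}$ has the form $p \cdot b$ with $b \in \TL^N({\Sigma}\smallsetminus D^2, \hat x)$. Using the surjection $\Ktheory_0(\Ho(\Kom({\Sigma}\smallsetminus D^2, \hat x))) \to \TL^N({\Sigma}\smallsetminus D^2, \hat x)$ supplied by the identifications in section \ref{cateofchcx}, lift $b$ to some $\mathcal{B}$; then $P \cup \mathcal{B}$ lies in $\inpp{P}$ and has $\Ktheory_0$ class $p \cdot b$. For $R^N$-scalar multiples, the identity $r \cdot (p \cdot b) = p \cdot (r \cdot b)$ combined with the fact that $r \cdot b$ is again an element of $\TL^N({\Sigma}\smallsetminus D^2, \hat x)$ shows that every element of $\inp{P}$ is the class of some gluing $P \cup \mathcal{B}'$. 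The $\mathbb{Z}$-linear combinations needed to build the entire submodule then emerge from closure of $\inpp{P}$ under cones, since $[\Cone(f)] = [B] - [A]$ realizes sums and differences at the level of $\Ktheory_0$.

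The main conceptual point, rather than a serious technical obstacle, is that $\inpp{P}$ is deliberately \emph{not} a thick subcategory: a direct summand of an object in $\inpp{P}$ need not itself lie in $\inpp{P}$, as flagged immediately before the proposition. This is exactly the departure from Verdier localization to be discussed in section \ref{verdier}. At the level of the Grothendieck group, however, thickness is invisible, so the two inclusions combine to give the desired equality.
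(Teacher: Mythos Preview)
Your two-inclusion argument is correct and is precisely the unpacking the paper has in mind: the paper's own proof is the single sentence ``The proof follows from definition~\ref{ideal},'' and your induction on the closure operations (gluings, grading shifts, cones) together with the lifting of generators is exactly how one cashes that sentence out. The only cosmetic point is that ``$p\cdot[\mathcal{B}]$'' should be read as the class of the gluing rather than a literal product, but you are using it in that sense.
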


The proof follows from definition \ref{ideal}. 
For more on $\K_0$ of infinite complexes see \cite{CK}. For a discussion of
$\K_0$ of triangulated categories see \cite{W2}.

\begin{proposition}{(Localization of a category)}\label{loccat}
Given a collection $S$ of morphisms of a category $\cal{C}$,
there exists a category $\cal{C}[S^{-1}]$ and a functor $Q : \cal{C}\to
\cal{C}[S^{-1}]$ satisfying the universal property:
\begin{itemize}
\item If $F : \cal{C} \to \cal{D}$ is a functor and $F(s)$ is an isomorphism
  for all $s\in S$ then there exists a functor $G : \cal{C}[S^{-1}] \to
  \cal{D}$ such that $F = G \circ Q$.
\end{itemize}
\end{proposition}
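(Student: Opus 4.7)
The plan is to construct $\mathcal{C}[S^{-1}]$ explicitly by formally adjoining two-sided inverses to the morphisms in $S$, following the standard Gabriel--Zisman recipe, and then verify the universal property directly. First I would set $\Obj(\mathcal{C}[S^{-1}]) = \Obj(\mathcal{C})$. A \emph{zigzag} from $X$ to $Y$ is a finite string
$$X = X_0 \xrightarrow{f_1} X_1 \xleftarrow{s_1} X_2 \xrightarrow{f_2} \cdots \xrightarrow{\;} X_n = Y,$$
in which forward arrows are arbitrary morphisms of $\mathcal{C}$ and each backward arrow belongs to $S$. A morphism $X \to Y$ in $\mathcal{C}[S^{-1}]$ is an equivalence class of zigzags under the congruence generated by: (i) replacing two consecutive same-direction arrows by their composite; (ii) inserting or deleting an identity; (iii) collapsing $X \xrightarrow{s} Z \xleftarrow{s} X$ or $X \xleftarrow{s} Z \xrightarrow{s} X$ to $1_X$ whenever $s \in S$. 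Composition is concatenation of zigzags (splicing in an identity if needed to preserve the alternation pattern), and identities are length-zero zigzags.

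The quotient functor $Q : \mathcal{C} \to \mathcal{C}[S^{-1}]$ is the identity on objects and sends $f : X \to Y$ to the class of its length-one zigzag. Relation (iii) then shows $Q(s)$ is a two-sided isomorphism for every $s \in S$, with inverse represented by the backward zigzag $Y \xleftarrow{s} X$.

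For the universal property, given $F : \mathcal{C} \to \mathcal{D}$ inverting $S$, I would define $G : \mathcal{C}[S^{-1}] \to \mathcal{D}$ on objects by $G(X) = F(X)$, and on a representative zigzag by replacing each forward $f_i$ by $F(f_i)$ and each backward $s_j$ by $F(s_j)^{-1}$, then composing in $\mathcal{D}$. Each of the defining relations (i)--(iii) is tautologically preserved by this recipe, so $G$ descends to equivalence classes; functoriality follows since composition was defined by concatenation. The equality $F = G \circ Q$ is immediate on length-one zigzags and is then forced on all zigzags by multiplicativity, giving uniqueness.

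The principal obstacle is set-theoretic: without further assumptions the collection of zigzags between two fixed objects is a proper class, so $\mathcal{C}[S^{-1}]$ need not be locally small. Standard remedies are to assume $\mathcal{C}$ is small, to work within an enlarged universe, or --- when $S$ admits a calculus of fractions --- to replace zigzags by roofs $X \leftarrow Z \to Y$ with the backward arrow in $S$, in which case local smallness is inherited from $\mathcal{C}$. The bare existence claim above, however, requires only the universal-property construction and does not depend on any such extra structure.
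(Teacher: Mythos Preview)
Your construction is correct and is essentially the same as the paper's: the paper sketches $\mathcal{C}[S^{-1}]$ by realizing $\mathcal{C}$ as a directed graph, adjoining formal inverse edges for morphisms in $S$, and taking morphisms to be paths modulo the obvious relations, citing Gelfand--Manin for details. Your zigzag presentation is exactly this graph-of-paths description written out explicitly, with the universal property verified in full.
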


The category $\cal{C}[S^{-1}]$ is constructed by realizing $\cal{C}$ as a
directed graph, adding edges corresponding to inverses of elements of $S$
and taking morphisms to be paths in this graph subject to the most obvious
relations. For details see \cite{GM}.

\begin{definition}{(``brusque quotient'')}\label{localset}
Given $P'\in \inpp{P} \subset \Ho(\Kom)$, if $f : X\to P'$ or $g : P' \to X$
there are maps $\pi_f : \Cone(f) \to X$ and $i_g : X \to \Cone(g)[-1]$. We
wish to invert all such maps. Set
$$S = \{ \pi_f| f : X\to P',\, P' \in \inpp{P} \} \cup \{ i_g| g : P'\to X,\, P' \in \inpp{P} \},$$
and define
$$\Ho(\Kom)/\inpp{P} = \Ho(\Kom)[S^{-1}].$$
\end{definition}

Note that this version of localization has some unusual properties. For instance,
$\Id_{A\oplus B} \cong 0 \not\Rightarrow \Id_{A} \cong 0$ in the quotient, and the familiar
triangulated structure is now gone.
Section \ref{verdier} explains how this definition is different from the usual Verdier localization.

We will now show that the above construction can be reformulated in a more
concrete way. (The reader may note that the theme of characterizing localization in terms of iterated
extensions appears in algebraic topology literature, cf. \cite{Chacholski}). We begin with an
equivalence relation on objects in $\Kom$.

\begin{definition}{($\inpp{P}-$equivalence)}
Let $\inpp{P}\subset\Ho(\Kom)$ be as above then for any $A,B\in\Ho(\Kom)$ we
say that $A$ is $\inpp{P}$-\emph{equivalent} to $B$ if there exists a sequence of
maps $\{f_i\}_{i=1}^m$ and objects $Q_i \in \inpp{P}$ such that

\begin{enumerate}
\item $f_1 : B \to Q_1$ or $f_1 : Q_1 \to B$ for some $Q_1 \in \inpp{P}$
\item $f_i : \Cone(f_{i-1}) \to Q_i$ or $f_i : Q_i \to \Cone(f_{i-1})$
\end{enumerate}

and $\Cone(f_m)\cong A$
\end{definition}

This equivalence relation amounts to isomorphism in the category
$\Ho(\Kom)/\inpp{P}$ defined above:

\begin{proposition}
$A\cong B$ in $\Ho(\Kom)/\inpp{P}$ if and only if $A$ is
  $\inpp{P}$-equivalent to $B$ in $\Ho(\Kom)$.
\end{proposition}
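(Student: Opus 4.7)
I plan to treat the two directions separately, with the ``only if'' direction being the technically substantive one.

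For $(\Leftarrow)$: Given an $\inpp{P}$-equivalence chain $B = B_0, B_1, \ldots, B_m = A$ with $B_i = \Cone(f_i)$ for $f_i$ a map between $B_{i-1}$ and some $Q_i \in \inpp{P}$, I would identify at each step the corresponding morphism in the class $S$ of Definition \ref{localset}. If $f_i : B_{i-1} \to Q_i$ then $\pi_{f_i} : B_i \to B_{i-1}$ lies in $S$; if $f_i : Q_i \to B_{i-1}$ then $i_{f_i} : B_{i-1} \to B_i[-1]$ lies in $S$. Since $S$-morphisms are inverted in $\Ho(\Kom)/\inpp{P}$, consecutive $B_{i-1}$ and $B_i$ become isomorphic in the quotient, and transitivity yields $A \cong B$. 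Grading shifts that arise on the $i_g$ side are themselves invertible in the localization: $0 \in \inpp{P}$ and $\Cone(X \to 0) = X[1]$ furnish an $S$-morphism identifying $X$ with $X[1]$.

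For $(\Rightarrow)$: I would first verify that $\inpp{P}$-equivalence is an equivalence relation on objects of $\Ho(\Kom)$. Reflexivity follows from the step $f_1 : 0 \to B$ with $\Cone(f_1) \cong B$; transitivity by concatenation of chains; symmetry by rotating each distinguished triangle in $\Ho(\Kom)$ so that a step $B_{i-1} \rightsquigarrow B_i = \Cone(B_{i-1} \to Q_i)$ becomes $B_i \rightsquigarrow B_{i-1}[1] = \Cone(Q_i \to B_i)$, with accumulated shifts absorbed by the identification $X \sim X[1]$ from the previous paragraph. Combined with $(\Leftarrow)$, this already shows that iso classes in the quotient are at least as coarse as $\inpp{P}$-equivalence classes.

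To see that no further identifications occur, I would unpack the explicit description of the localization from Proposition \ref{loccat}: morphisms in $\Ho(\Kom)/\inpp{P}$ are equivalence classes of zigzags whose backward arrows lie in $S$. Any isomorphism $A \cong B$ in the quotient is witnessed by such a zigzag, and each backward arrow corresponds to a single cone over an object of $\inpp{P}$, i.e., one step of $\inpp{P}$-equivalence. For a forward $\Ho(\Kom)$-morphism $a : X \to Y$ appearing in a zigzag that as a whole represents an iso, the invertibility of the zigzag forces $\Cone(a)$ to lie in $\inpp{P}$ after the $S$-identifications already made, so that the triangle $X \to Y \to \Cone(a) \to X[1]$ exhibits $X \sim Y$ directly. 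Concatenating these per-arrow $\inpp{P}$-equivalence steps produces the required chain from $A$ to $B$. The main obstacle is exactly this last step of converting arbitrary forward $\cal{C}$-morphisms into cone constructions over $\inpp{P}$, and it is where the triangulated structure of $\Ho(\Kom)$ is used essentially.
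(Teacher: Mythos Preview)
Your $(\Leftarrow)$ direction is correct and is exactly what the paper calls ``obvious.''

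There is a genuine gap in your $(\Rightarrow)$ direction. You assert that for a forward $\Ho(\Kom)$-morphism $a : X \to Y$ occurring in a zigzag which, as a whole, is invertible in the localization, one has $\Cone(a)\in\inpp{P}$ (``after the $S$-identifications already made''). This is not justified, and the paper's own set-up gives strong reasons to doubt it. In a Verdier localization the statement ``$a$ becomes an isomorphism $\Rightarrow \Cone(a)$ lies in the localizing subcategory'' holds because the quotient is triangulated and the subcategory is thick; but Section~\ref{verdier} is devoted to explaining that the brusque quotient is \emph{not} Verdier, that $\inpp{P}$ is \emph{not} thick, and that ``the familiar triangulated structure is now gone.'' So the mechanism you invoke --- invertibility of the zigzag forcing each forward leg to have cone in $\inpp{P}$ --- is precisely the Verdier-style reasoning that the construction is designed to avoid. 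Even the weaker claim that $a$ itself becomes an isomorphism in the localization does not follow from invertibility of the full zigzag: in a Gabriel--Zisman localization a composite can be invertible without any individual factor being so.

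The paper's argument for $(\Rightarrow)$ is different in shape. Rather than analyzing cones of the forward legs, it asserts that an isomorphism $\phi:A\to B$ in $\Ho(\Kom)/\inpp{P}$ may be written as $\phi=h_n\phi_n\cdots h_1\phi_1$ where each $\phi_k$ is already an isomorphism in $\Ho(\Kom)$ and each $h_k$ is one of the formally inverted maps $\pi_f^{\pm1}$ or $i_g^{\pm1}$; the $\inpp{P}$-equivalence chain is then read off directly from the $h_k$'s, with the $\phi_k$'s contributing nothing (isomorphic objects in $\Ho(\Kom)$ are $\inpp{P}$-equivalent via $0\in\inpp{P}$). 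In other words, the paper absorbs all the ``forward $\mathcal{C}$-arrows'' into genuine $\Ho(\Kom)$-isomorphisms rather than trying to show their cones lie in $\inpp{P}$. If you want to repair your argument, that is the step you should aim for: reduce an arbitrary isomorphism in the localization to a word in $S\cup S^{-1}$ and $\Ho(\Kom)$-isomorphisms, rather than trying to control cones of general forward arrows.
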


\begin{proof}
The first direction is obvious. Suppose $A\cong B$ in $\Ho(\Kom)/\inpp{P}$
then by definition an isomorphism $\phi : A \to B$ is a map of the form
$\phi = h_n \phi_n h_{n-1} \phi_{n-1} \cdots h_1 \phi_1$ where $\phi_k$ is
an isomorphism in $\Ho(\Kom)$ and $h_k$ is an isomorphism of the form
$\pi_{f_k}^{\pm 1}$ or $i_{g_k}^{\pm 1}$ as in definition \ref{localset}
above. A map $h_k : X_k \to X_{k+1}$ tells us that either $X_k$ is
equivalent to $X_{k+1}$ coned on $P'\in \inpp{P}$ or vice versa.
\end{proof}

Given an object $A \in \Ho(\Kom)$ we can combine all of the objects which
are $\inpp{P}$-equivalent to it to form a category which is a natural analogue of
the coset $A + \inp{P}$ in the image of the Grothendieck group.

\begin{definition}{(Coset categories)} \label{coset definition}
Given $A\in\Ho(\Kom)$, the \emph{coset} associated to $A$ is the
full subcategory, denoted $A + \inpp{P} \subset \Ho(\Kom)$, consisting of objects $Y$ which
are $\inpp{P}$-equivalent to $A$ in $\Ho(\Kom)$.
\end{definition}

Isomorphism classes of coset subcategories are in bijection with
$\inpp{P}$-equivalence classes:

\begin{proposition}
$A$ is $\inpp{P}$-equivalent to $B$ if and only if $A + \inpp{P} \cong B +
  \inpp{P}$
\end{proposition}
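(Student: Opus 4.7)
The plan is to reduce the claim to the statement that $\inpp{P}$-equivalence is an equivalence relation on objects of $\Ho(\Kom)$; once this is known, the proposition follows immediately from the fact that $A+\inpp{P}$ is by definition the full subcategory on the equivalence class of $A$, and two equivalence classes either coincide or are disjoint.

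The cleanest way to establish that $\sim_{\inpp{P}}$ is an equivalence relation is to invoke the preceding proposition, which identifies $\inpp{P}$-equivalence with categorical isomorphism in $\Ho(\Kom)/\inpp{P}$; reflexivity, symmetry, and transitivity then transfer from the corresponding properties of $\cong$. As a direct sanity check, reflexivity is witnessed by the length-one sequence $f_1\co 0\to A$: here $0\in\inpp{P}$ because $\inpp{P}$ is closed under cones and $\Cone(\Id_{P'})\simeq 0$ for any $P'\in\inpp{P}$, and $\Cone(f_1)\cong A$.

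For the forward implication of the proposition, suppose $A\sim_{\inpp{P}} B$. By transitivity, $Y\sim_{\inpp{P}} A$ if and only if $Y\sim_{\inpp{P}} B$, so $A+\inpp{P}$ and $B+\inpp{P}$ consist of exactly the same objects and thus agree as full subcategories of $\Ho(\Kom)$, and \emph{a fortiori} $A+\inpp{P}\cong B+\inpp{P}$. For the converse, given an isomorphism $F\co A+\inpp{P}\to B+\inpp{P}$, the image $A':=F(A)$ lies in $B+\inpp{P}$ and is isomorphic to $A$ in $\Ho(\Kom)$; any such isomorphism $A\cong A'$ promotes to an $\inpp{P}$-equivalence via the length-one sequence $0\to A'$, whose cone is $A'\cong A$. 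Hence $A\sim A'\sim B$, as required. The main obstacle is symmetry of $\sim_{\inpp{P}}$: a direct reversal of a cone sequence requires rotating the distinguished triangle at each step and introduces grading shifts that must be absorbed into the $Q_i$, using closure of $\inpp{P}$ under shifts; appealing to the previous proposition sidesteps this bookkeeping.
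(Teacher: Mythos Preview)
The paper gives no proof of this proposition; it is meant to be immediate once one knows that $\inpp{P}$-equivalence is an equivalence relation (which, as you correctly note, follows from the preceding proposition identifying it with isomorphism in $\Ho(\Kom)/\inpp{P}$). Your forward direction is exactly this: transitivity forces the two cosets to have the same objects, hence to coincide as full subcategories.

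Your converse, however, has a gap. You take an isomorphism of categories $F\co A+\inpp{P}\to B+\inpp{P}$ and assert that $A':=F(A)$ is isomorphic to $A$ \emph{in $\Ho(\Kom)$}. That does not follow: an abstract equivalence between two full subcategories of a fixed ambient category need not send an object to something isomorphic to it in the ambient category. (For a toy example, in graded vector spaces the full subcategories on $k$ in degree $0$ and on $k$ in degree $1$ are isomorphic as categories, yet the two objects are not isomorphic.) Nothing in your argument rules this out.

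The fix is to read ``$A+\inpp{P}\cong B+\inpp{P}$'' as the paper intends, namely equality (up to isomorphism in $\Ho(\Kom)$) of the two full subcategories --- this is the natural analogue of equality of cosets. Under that reading the converse is a one-liner: by reflexivity $A\in A+\inpp{P}=B+\inpp{P}$, hence $A$ is $\inpp{P}$-equivalent to $B$. No functor $F$ is needed.
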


Notice that the Grothendieck
group of a coset category $\K_0(A + \inpp{P})$ can be identified with the
coset $\K_0(A) + \inp{P}$. In particular, the non-triviality of
$\TL^N/\inp{P_N}$ implies that the quotient $\Ho(\Kom)/\inpp{P}$ is highly
non-trivial.
More precisely, in light of remark at the end of section \ref{arelcyc}, the machinery developed
in this section may be viewed as a categorification of the $2$-dimensional part of the Turaev-Viro theory.

\begin{remark}
In \cite{KhH} Khovanov considered additive monoidal categories
$(\cal{C},\otimes,1)$ of graded objects in which $1 \oplus q1 \oplus \cdots
\oplus q^{n-1}1 \cong 0.$ Such an isomorphism implies that the polynomial
$1+q + \cdots + q^{n-1}$ is equal to $0$ in the Grothendieck group
$\K_0(\cal{C})$.

In this paper, $\Tr(P_N)$ is isomorphic to a chain complex consisting only
of empty diagrams \cite{CK} and the relation $\Tr(P_N) = 0$ implies that the
polynomial $1+q^2 + \cdots + q^{2N}$ is zero in the Grothendieck group.

There are two other more notable differences between these ideas. In our
context, the quotient must be homotopy invariant because $P_N$ is only
determined up to homotopy. Also inasmuch as $P_N$ is not determined by its
graded Euler characteristic, the effect of the quotient on the category has
less to do with the polynomial $\K_0(\Tr(P_N))$ and more to do with the
structure of $P_N$ itself.

\end{remark}

\subsection{On Verdier Quotients}\label{verdier}

In section \ref{cosetcon} we quotiented $\Kom^N(\Sigma)$ by $P_N$ in an
unusual way. In this section we recall elements of the most common quotient
construction for triangulated categories. In particular, we explain why it
is not useful in the context of this paper. Readers interested in a detailed
discussion of the Verdier quotient should consult \cite{GM, Neeman}.

Given a triangulated category $\cal{C}$ we would like to invert some family
of maps $T$ so that the objects of a full triangulated subcategory $\cal{D}
\subset\cal{C}$ become isomorphic to zero in a triangulated quotient
category $\cal{C}[T^{-1}] = \cal{C}/\cal{D}$.

In order to produce a triangulated quotient category $\cal{C}[T^{-1}]$ the
maps in $T$ are required to admit a calculus of left and right
fractions. The collection $T$ is additionally required to be closed under
homological grading shifts and if $(f,g,h)$ is a map between triangles with
$f,g\in T$ then there exists an $h'\in T$ so that $(f,g,h')$ is a map
between triangles. Most of these conditions are not imposed by the brusque
quotient of section \ref{cosetcon}.

If $\cal{D}$ is a full triangulated subcategory of a triangulated category
$\cal{C}$, then $T$ is defined to be the collection of maps $f : X \to Y$
which fit into an exact triangle:

$$X\to Y\to Z \to X[1]$$

where $Z\in \cal{D}$. If the collection $T$ is chosen in this manner then it
satisfies all of the properties described above. The quotient functor $Q :
\cal{C} \to \cal{C}[T^{-1}]$ is a triangulated functor and universal in the
category of small triangulated categories. The quotient functor $F$ in
proposition \ref{loccat} does not satisfy these properties.

\begin{definition} A \emph{thick} (or {\em \'epaisse}) subcategory $\cal{D}\subset\cal{C}$ is a subcategory which is closed under retracts: $A \oplus B \in \cal{D} \Rightarrow A \in \cal{D}$.
\end{definition}

The Verdier quotient only sees the smallest thick subcategory containing the
subcategory under consideration. More specifically, if
$\cal{D}\subset\cal{C}$ is a subcategory and $\bar{\cal{D}}$ is the smallest
thick subcategory containing $\cal{D}$ then $\cal{C}/\cal{D} \cong \cal{C}/\bar{\cal{D}}.$

The following proposition explains our problem.

\begin{proposition}
The smallest thick subcategory containing the ideal $\inpp{P_2}$ is
$\Ho(\Kom(\Sigma))$.
\end{proposition}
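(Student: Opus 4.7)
The plan is to show every object of $\Ho(\Kom(\Sigma))$ lies in the thick closure $\overline{\inpp{P_2}}$. I would work first in the local case $\Sigma = D^2$ with four marked points and then propagate the conclusion by a gluing argument.

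In the local case, the universal projector $P_2$ is supported in nonnegative homological degrees with $P_2^0 = \Id$, so there is a canonical chain map $p\colon P_2\twoheadrightarrow \Id$ onto its degree-zero summand (note that there is no nonzero chain map in the opposite direction, since the differential $d^0\colon\Id\to qU$ of $P_2$ is nonzero). This gives a distinguished triangle
$$\sigma_{\geq 1}P_2 \to P_2 \stackrel{p}{\to} \Id \to \sigma_{\geq 1}P_2[1]$$
in $\Ho(\Kom)$, where $\sigma_{\geq 1}P_2$ is the tail truncation. Since $P_2\in\inpp{P_2}$, it suffices to exhibit $\sigma_{\geq 1}P_2\in\overline{\inpp{P_2}}$ in order to conclude $\Id\in\overline{\inpp{P_2}}$. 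My approach here is to construct $\sigma_{\geq 1}P_2$ as an iterated cone of objects in $\inpp{P_2}$, using the idempotency $P_2\otimes P_2\simeq P_2$ together with contractibility of $P_2$ under turnbacks to identify the structure of the tail with cones of natural maps between gluings of $P_2$.

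For the global case, given a basic multi-curve $F\subset\Sigma$, choose a small disk $D^2\subset\Sigma$ with four marked points on $\partial D^2$ such that $F\cap D^2$ consists of two parallel arcs in the identity matching, and write $F = \Id_{D^2}\cup F_0$ with $F_0\subset\Sigma\setminus D^2$. Gluing the local triangle above with $F_0$ in the complement produces a triangle entirely within $\overline{\inpp{P_2}}$ whose third vertex is $F$, so $F\in\overline{\inpp{P_2}}$. For edge cases in which $F$ has no two-parallel-arcs region (for instance $F=\emptyset$ on a closed surface), close off $P_2$ with caps in the complement to realize an object of $\inpp{P_2}$ whose degree-zero part, via neck cutting in $\Cob(\Sigma)$, decomposes as a direct sum of empty diagrams in various $q$-shifts; one then exhibits $\emptyset$ as a retract. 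Since $\overline{\inpp{P_2}}$ contains every basic object and is closed under direct sums, cones, and grading shifts, it equals all of $\Ho(\Kom(\Sigma))$.

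The main obstacle is the middle step of the local argument, namely proving $\sigma_{\geq 1}P_2\in\overline{\inpp{P_2}}$. Unlike $P_2$ itself, the tail truncation $\sigma_{\geq 1}P_2$ is not manifestly a gluing of $P_2$ with a complement object, so one must give an indirect construction; this is where the specific structure of the Bar-Natan category and the defining properties of $P_2$ come into essential use. A secondary subtlety is the choice of local disk in the global step when $F$ has degenerate structure, which is handled by the neck-cutting variant above.
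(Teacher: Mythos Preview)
Your main line of attack has a real gap exactly where you flag it: you never show that the tail $\sigma_{\geq 1}P_2$ lies in $\overline{\inpp{P_2}}$, and the tools you propose (idempotency $P_2\otimes P_2\simeq P_2$ and contractibility under turnbacks) do not obviously produce it. Every chain group of the tail is a turnback, while any planar gluing in which a cap or cup is fed into $P_2$ is contractible; so the gluings of $P_2$ you can write down are either $P_2$-shaped or null, and there is no evident way to assemble the tail from them by cones. Without an independent construction of $\sigma_{\geq 1}P_2$ inside $\overline{\inpp{P_2}}$, your triangle $\sigma_{\geq 1}P_2\to P_2\to\Id$ does not place $\Id$ in the thick closure, and the argument stalls.

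The paper's proof is one line, and it is essentially the argument you relegate to an ``edge case'': take the full closure $\tr(P_2)\in\inpp{P_2}$. An explicit computation in \cite{CK} gives $\tr(P_2)\simeq q^{-2}\emptyset\oplus\emptyset\oplus\cdots$, so the empty diagram is a \emph{retract} of an object already in the ideal; thickness then forces $\emptyset\in\overline{\inpp{P_2}}$. This is not a degenerate boundary case but the entire mechanism: for any multicurve $F\subset\Sigma$, place the closed-off $P_2$ in a small disk disjoint from $F$ and take $F$ as the complement $B$; then $\tr(P_2)\sqcup F\in\inpp{P_2}$ has $q^{-2}F$ as a direct summand, so $F\in\overline{\inpp{P_2}}$. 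No detour through the $4$-pointed identity, no local-to-global gluing step, and the problematic tail never appears. You should promote your ``edge case'' to the main argument and drop the $\sigma_{\geq 1}P_2$ route entirely.
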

\begin{proof}
In \cite{CK} it was shown that $\tr(P_2) \simeq q^{-2} \mathbb{Z} \oplus \mathbb{Z} \oplus \ldots$.
\end{proof}

It follows that $\Ho(\Kom(\Sigma))/\inpp{P_2} \cong 0$. The following
general statement also holds.

\begin{proposition}
If $L$ is any link or $(1,1)-$tangle then the smallest thick subcategory containing the ideal $\inpp{L}$ is $\Ho(\Kom(\Sigma)).$
\end{proposition}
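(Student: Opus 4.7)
The plan is to adapt the argument used for $\inpp{P_2}$ in the previous proposition: for every $B\in\Ho(\Kom(\Sigma))$ I will exhibit an object of $\inpp{L}$ having $B$ as a retract, which forces $B$ into the thick closure. To begin, place $L$ inside a small disk $D^2\subset\Sigma$ (with two marked boundary points in the $(1,1)$-tangle case, none in the link case) and let $B$ be any object supported on $\Sigma\smallsetminus D^2$. Then by definition \ref{ideal}, $B\sqcup L\in\inpp{L}$.

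The crux is a local statement inside the disk: the Bar-Natan chain complex $[L]\in\Ho(\Kom(D^2))$ should be homotopy equivalent to a direct sum $\bigoplus_{i} q^{a_i}\mathbf{1}[b_i]$, where $\mathbf{1}$ denotes the empty diagram $\emptyset$ for a link and the identity strand for a $(1,1)$-tangle. Every smoothing of the crossings of $L$ consists of a collection of null-homotopic circles (together with one arc in the tangle case); applying the local isomorphisms $\varphi,\psi$ from Section~\ref{categorified TL section} replaces each circle by $q\mathbf{1}\oplus q^{-1}\mathbf{1}$, so each chain object of $[L]$ is already a sum of shifts of $\mathbf{1}$. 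Gaussian elimination along isomorphism entries of the differential then yields a chain-homotopy-equivalent reduced complex, which one expects to split as a direct sum of shifts of $\mathbf{1}$ in the homotopy category, since the nontriviality of Khovanov/Bar-Natan homology guarantees that nothing further cancels.

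Compatibility of the disjoint-union gluing $B\sqcup(-)$ with this decomposition then gives $B\sqcup L\simeq\bigoplus_i q^{a_i}B[b_i]$, exhibiting $B$ as a direct summand of an object of $\inpp{L}$. The smallest thick subcategory containing $\inpp{L}$ is closed under retracts and grading shifts, so it contains $B$; since $B$ was arbitrary, this thick subcategory equals $\Ho(\Kom(\Sigma))$.

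The main obstacle I expect is justifying the splitting step in the Bar-Natan category: one must verify that, after Gaussian elimination, a chain complex of shifted copies of $\mathbf{1}$ over the endomorphism ring $\Endo(\mathbf{1})$ (which involves the deformation parameter $\alpha$) really does split in the homotopy category, rather than admitting some subtler indecomposable. For closed links this is essentially the phenomenon behind the identification $\tr(P_2)\simeq q^{-2}\mathbb{Z}\oplus\mathbb{Z}\oplus\cdots$ used in the previous proposition; for $(1,1)$-tangles I would attempt to reduce to the link case by passing to the Markov closure $\tr(L)$, which lies in $\inpp{L}$ by the gluing clause of definition \ref{ideal}.
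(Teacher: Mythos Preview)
Your overall strategy matches the paper's: show that every $B$ is a retract of an object $B\sqcup L\in\inpp{L}$, hence lies in the thick closure. The gap is in your ``crux'': the complete splitting $[L]\simeq\bigoplus_i q^{a_i}\mathbf{1}[b_i]$ is false in general. After you Gaussian--eliminate the isomorphism entries, the remaining differential need not vanish --- its entries may be non-invertible non-zero endomorphisms such as multiplication by $2$ (a torus cobordism) or by a dotted identity. Concretely, torsion in integral Khovanov homology (e.g.\ the $\mathbb{Z}/2$ for the trefoil) is exactly the shadow of a non-split two-term subcomplex of the form $q^a\mathbf{1}\xrightarrow{\,2\,}q^b\mathbf{1}$. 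So ``nothing further cancels'' only tells you no entry is a unit, not that the differential is zero.

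The repair is that you need far less than a complete decomposition: a \emph{single} shifted copy of $\mathbf{1}$ split off as a direct summand of $[L]$ already makes $B$ a retract of $B\sqcup L$. This is exactly what the paper uses, invoking the Lee/Rasmussen picture: the Bar--Natan complex of any link or $(1,1)$-tangle is homotopy equivalent to one in which a trivial summand (at the grading that records the $s$-invariant) splits off, see \cite{SM}. With that one summand your gluing argument finishes verbatim; your proposed reduction of the $(1,1)$-tangle case to the link case via the closure $\tr(L)\in\inpp{L}$ is then unnecessary, though harmless.
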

\begin{proof}
The chain complex associated to $L$ is homotopy equivalent to a chain
complex in which the Rasmussen invariant can be computed from the grading of
a trivial direct summand (see \cite{SM}).
\end{proof}

A conceptual explanation could be formulated by instead considering the
category $\Der^{\perf}(H^*(\mathbb{C}P^1)\module)$ (compare \cite{KH}). Here
thick subcategories are in correspondence with closed subschemes \cite{PB}
and $\Spec(\mathbb{Z}[x]/(x^2))$ has no interesting subschemes.

Other quotients such as those of Bousfield and Drinfeld also respect
thickness in the manner described above. It seems that the intricacies of
these homotopy theories are not seen by the usual quotients.

\section{Handle Slides} \label{handleslide}

In this section we use the framework discussed above to define objects
$\Omega$ which are invariant under handle slides.  These objects categorify
the magic elements ${\omega}_N$ in the annular Temperley-Lieb space for low
levels $N$.  Recall that ${\omega}_N$ is a crucial ingredient in the
construction of the $\SU(2)$ TQFT at level $N$, see section \ref{rkh}. The
construction of these objects relies on a detailed analysis of the universal
projector complexes from \cite{CK}.

\subsection{Handle Slides and The Second Projector} \label{second projector section}

The second projector is defined \cite{CK} to be the chain complex
$$\begin{diagram}
\CPic{p2box}\! & = & \!\CPic{n2-1} & \rTo^{\MPic{n2-1-sad}} &q \CPic{n2-s} &\rTo^{\MPic{n2-tops} \!\!\!- \MPic{n2-bots}} &q^{3} \CPic{n2-s} &
\rTo^{\MPic{n2-tops}\!\!\! + \MPic{n2-bots}} & q^{5} \CPic{n2-s} &  \cdots
\end{diagram}$$

in which the last two maps alternate ad infinitum. More explicitly,

$$P_2 = (C_*, d_*)$$

The chain groups are given by
$$
C_n =
\left\{
\begin{array}{lr}
q^0 \MPic{n2-1}&n = 0\\
q^{2n-1} \MPic{n2-s}&n > 0
\end{array}
\right.
$$

The differential is given by

$$
d_n =
\left\{
\begin{array}{llr}
\MPic{n2-1-sad}                 & : \MPic{n2-1}\!\! \to q \MPic{n2-s}   & n = 0\\
\MPic{n2-tops} \!\!\! + \MPic{n2-bots} & :   q^{4k-1} \MPic{n2-s}\!\!  \to q^{4k +1} \MPic{n2-s}                    &n \ne 0, n = 2k\\
\MPic{n2-tops} \!\!\! - \MPic{n2-bots} & : q^{4k +1} \MPic{n2-s}\!\!  \to q^{4k+3} \MPic{n2-s}                    & n = 2k+1
\end{array}
\right.
$$

\begin{theorem}{\cite{CK}}\label{secondproj thm}
  The chain complex $P_2 \in \Kom(2)$ defined above is a universal projector.
\end{theorem}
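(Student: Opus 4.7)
The plan is to verify the three defining properties of a universal projector from Theorem 3.5.

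Properties (1) and (2) are essentially immediate from the explicit description. The chain groups $C_n$ are supported in non-negative homological degrees, and the identity diagram appears only as $C_0$, with multiplicity one and trivial $q$-shift. A direct computation of the topological and $q$-degrees of the saddle, the ``top saddle'' and the ``bottom saddle'' cobordisms shows that each differential $d_n$ is a degree-zero morphism in $\Cob$. Before proceeding to (3) one must confirm that $P_2$ is actually a chain complex. The composition $d_1 \circ d_0$ is a saddle followed by the difference of the top- and bottom-saddles; after neck-cutting the middle cylinder appearing in this composition, the sphere relations force the two terms to cancel. For $n \ge 1$ the composition $d_{n+1} \circ d_n$ has the form $(\mathrm{top} \pm \mathrm{bot})(\mathrm{top} \mp \mathrm{bot}) = \mathrm{top}^2 - \mathrm{bot}^2$, and each of the two summands factors through a compressible sphere and hence vanishes. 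The alternating signs in the definition of $d_n$ are engineered precisely so that this cancellation is uniform in $n$.

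The substantive content is property (3), contractibility under turnbacks. Up to the reflection symmetry of the two-strand configuration it suffices to analyze what happens after capping off the top pair of endpoints. Under this operation the identity term becomes a single turnback $T$, while each saddle term for $n \ge 1$ becomes a turnback disjoint union with a closed circle. Applying the canonical isomorphism (circle) $\cong q^{-1}\emptyset \oplus q\emptyset$ in $\Cob$, each chain group $C_n$ with $n \ge 1$ splits as $q^{2n-2}T \oplus q^{2n}T$, and the capped complex takes the shape
$$ T \longrightarrow T \oplus q^{2}T \longrightarrow q^{2}T \oplus q^{4}T \longrightarrow q^{4}T \oplus q^{6}T \longrightarrow \cdots . $$
The differentials of this complex can be written as $2\times 2$ matrices whose entries are cup, cap, or identity cobordisms, read off from how the (signed combinations of) top- and bottom-saddles interact with the top-capped circle.

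The heart of the argument is to produce an explicit contracting homotopy. The plan is to pair, in each homological degree, the summand $q^{2k}T$ first appearing in $C_{k+1}$ with the copy of $q^{2k}T$ already present in $C_{k}$. The block analysis sketched above, when carried out carefully, shows that the component of each differential between paired summands is, up to sign, an identity cobordism, and that the complex has the upper-triangular form required for a telescoping contraction. Setting the homotopy $h$ to be the corresponding signed identity cobordism on each paired summand and zero on the remaining summand, a direct calculation then verifies $dh + hd = \mathrm{id}$. The main obstacle is precisely this bookkeeping step: one must compute the four matrix entries of each capped-off differential explicitly and confirm that the alternating signs in $d_n$ combine with the circle-evaluation isomorphism to produce the required upper-triangular block form with identities on the diagonal. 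Once this is established the homotopy identities follow at once, and together with (1) and (2) this identifies $P_2$ as the universal projector on two strands.
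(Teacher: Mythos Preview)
The present paper does not prove this theorem; it is quoted from \cite{CK}.  Your overall strategy---verify axioms (1) and (2) by inspection, confirm $d^2=0$, then establish (3) by capping one pair of endpoints, delooping the resulting circle, and contracting the telescoping complex---is exactly the argument carried out in \cite{CK}.

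One correction to your details: the higher differentials $d_n$ for $n\geq 1$ are \emph{dot} cobordisms on the top and bottom arcs of the turnback, not saddles (a saddle cannot be an endomorphism of the turnback diagram, and a single dot has precisely the required topological degree $-2$).  This changes the $d^2=0$ verification.  For $d_1\circ d_0$ the point is simply that the initial saddle connects the two arcs into a single sheet, so a dot placed on the top arc equals a dot placed on the bottom arc and their difference vanishes; no neck-cutting is involved.  For $n\geq 1$ one has $(\mathrm{top}\pm\mathrm{bot})(\mathrm{top}\mp\mathrm{bot})=\mathrm{top}^2-\mathrm{bot}^2=\alpha-\alpha=0$ by the two-dot relation; the individual summands equal $\alpha\cdot\mathrm{id}$, they do not vanish separately and do not factor through spheres.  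Your treatment of property (3) is unaffected by this slip and is correct as sketched.
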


We define the \emph{tail} of $P_2$ to be the chain complex

$$\begin{diagram}
\CPicTailbox{n2-s} & =  & q \CPic{n2-s} &\rTo^{\MPic{n2-tops} \!\!\!- \MPic{n2-bots}} & q^{3} \CPic{n2-s} & \rTo^{\MPic{n2-tops}\!\!\! + \MPic{n2-bots}} & q^{5} \CPic{n2-s} &  \cdots
\end{diagram}$$

so that

$$\CPic{p2box} = \Cone\left(\hspace{-.02in}\CPic{n2-1}\hspace{-.05in} \to \CPicTailbox{n2-s}\right) \normaltext{\hspace{.08in} and \hspace{.08in}} \CPic{n2-1}\hspace{-.09in} \simeq \Cone\left(\CPicTailbox{n2-s} \to \CPic{p2box}\right)$$

where the map defining the first cone is the saddle, $d_0$, appearing in the definition of
$P_2$ and the map in the second cone is the inclusion of the tail into $P_2$.

\begin{definition}{($\Omega_2$)} \label{omega2}
In the category $\Kom^2(S^1\times [0,1])$ let $\Omega_+$ be the
object consisting of a single curve about the origin and $\Omega_-$ be the
object consisting of a single nullhomotopic simple closed
curve. Let $\Omega = \Omega_+ \oplus \Omega_-$, we represent $\Omega$ by a curve
decorated with a dot. Graphically,
$$\Omega_+ = \CPPic{omega2p}, \quad \Omega_{-} = \CPPic{omega2m}, \quad
\CPPic{omega} = \CPPic{omega2p} \oplus \CPPic{omega2m}.$$
\end{definition}

\begin{theorem} \label{omega2 thm}
The coset $\bar{\Omega} = \Omega + \inpp{P_2}$ associated to the object $\Omega$ defined above is invariant under handle slides.
\end{theorem}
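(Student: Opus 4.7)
The plan is to reduce to a local calculation in the annulus and to exploit the cone decomposition of the universal projector $P_2$ from Theorem \ref{secondproj thm}. A handle slide over the essential core $C$ of the annulus $A = S^1\times I$ containing $\Omega$ modifies the ambient picture by attaching a parallel copy of $C$ to a neighboring component. By compatibility of $\Cob$ with gluing, it suffices to show that inside $A$ the object $\Omega \sqcup C'$, where $C'$ denotes an additional parallel copy of the essential circle, is $\inpp{P_2}$-equivalent to $\Omega$.

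Decompose $\Omega \sqcup C' = (\Omega_+ \sqcup C') \oplus (\Omega_- \sqcup C')$. In the second summand the nullhomotopic loop $\Omega_-$ expands via the isomorphism $\varphi$ of Section \ref{categorified TL section} into $q\emptyset \oplus q^{-1}\emptyset$, so $\Omega_- \sqcup C'$ is already isomorphic to $q\Omega_+ \oplus q^{-1}\Omega_+$ in $\Ho(\Kom)$ without appealing to the quotient. For the first summand, view a local vertical strip of $A$ between $\Omega_+$ and $C'$ as a two-strand region and insert the universal projector $P_2$ there. The second identity of Theorem \ref{secondproj thm} presents the two-strand identity as $\Cone(\mathrm{tail} \to P_2)$, so globally $\Omega_+ \sqcup C'$ fits in a triangle whose third vertex is the $P_2$-decorated object $P_2(\Omega_+ \sqcup C') \in \inpp{P_2}$. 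The map from $\mathrm{tail}(\Omega_+ \sqcup C')$ to $P_2(\Omega_+ \sqcup C')$ is of the type appearing in Definition \ref{localset}, so its cone projection becomes an isomorphism in $\Ho(\Kom)/\inpp{P_2}$; equivalently, $\Omega_+ \sqcup C'$ is $\inpp{P_2}$-equivalent to the tail complex applied to $\Omega_+ \sqcup C'$. Each term in this tail is obtained from two parallel essential circles by inserting a turnback saddle, which by neck-cutting and $\varphi$ reduces to a grading-shifted direct sum of $\Omega_+$ and $\Omega_-$. The alternating signs in the tail differential of Section \ref{second projector section} should produce a telescoping cancellation whose surviving contributions, combined with the $\Omega_- \sqcup C'$ summand, reassemble to $\Omega = \Omega_+ \oplus \Omega_-$.

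The main obstacle will be controlling the infinite tail of $P_2$: one must check that the telescoping calculation converges in $\Kom^{\infty}$ and that every map in the resulting chain of $\inpp{P_2}$-equivalences is genuinely of the form $\pi_f$ or $i_g$ required by Definition \ref{localset}, with source or target in $\inpp{P_2}$. A secondary subtlety is to justify that the two-strand projector identity applies correctly to the closed-loop configuration in the annulus; this should follow from the cylinder relation of Section \ref{categorified TL section} applied in a neighborhood of the two parallel essential circles, allowing one to legally open them into a two-strand region while tracking the grading shifts introduced along the way.
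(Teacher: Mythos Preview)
Your reduction is the wrong local model for a handle slide. A handle slide of a neighboring strand over the $\Omega$--labelled core is detected in the annulus \emph{with two marked boundary points}: one must compare the strand running past $\Omega$ on one side with the strand running past on the other side. It does \emph{not} reduce to the closed--annulus statement ``$\Omega \sqcup C' \cong \Omega$''. In fact that statement is already false on the level of $\K_0$: writing $X$ for the class of an essential circle, one has $\K_0(\Omega)=X+[2]$ and $\K_0(\Omega\sqcup C')=X^2+[2]X$, while the ideal $\inp{p_2}$ in the closed annulus is generated by $X^2-1$ and by $[3]$; modulo these, $X\cdot\Omega-\Omega=(1-[2])(1-X)$, which is nonzero (specialize $q$ to a primitive cube root of unity, where $[3]=0$ but $[2]=-1$). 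So the target equivalence you set out to prove cannot hold.

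The internal computation also breaks down. When you close the turnback $e$ between the two essential circles, the two arcs of $e$ together with the two ``going--around'' arcs form a \emph{single nullhomotopic} circle (not a mixture of $\Omega_+$ and $\Omega_-$), and under this closure the two dotted maps in the tail of $P_2$ become dots on the \emph{same} component. Hence the closed--up tail is
\[
q\,\Omega_-\xrightarrow{\,0\,} q^{3}\Omega_-\xrightarrow{\,2\cdot\,} q^{5}\Omega_-\xrightarrow{\,0\,} q^{7}\Omega_-\to\cdots,
\]
which does not telescope to a finite object; no cancellation of the sort you describe occurs.

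The paper's argument is different and much shorter. Working in the annulus with two marked points, one writes down \emph{two} placements of $P_2$: one whose identity term is ``strand next to $\Omega_+$'' and one whose identity term is ``strand next to $\Omega_-$'' on the opposite side. The crucial observation is that the closed--up \emph{tails} of these two placements are \emph{literally the same chain complex}. Since both placements lie in $\inpp{P_2}$, the two identity terms are $\inpp{P_2}$--equivalent through that common tail. Together with the vertical reflection this yields $|\Omega_+ \cong \Omega_-|$ and $\Omega_+| \cong |\Omega_-$, hence $|\Omega \cong \Omega|$. No infinite telescoping or convergence issue arises because the tails are compared as equal objects, not simplified termwise.
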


\begin{proof}
Consider first the partial trace of $P_2$ in $\Kom^2(S^1\times [0,1])$,
$$\CPPic{omega2eq2a} = \Cone\left(\hspace{-.05in}\CPPic{omega2eq2b}\hspace{-.05in} \to \CPPicTailbox{omega2eq2c}\right).$$

As above, the square brackets denote the tail of the second projector. Consider also the following diagram:
$$\CPPic{omega2eq1a} = \Cone\left(\hspace{-.05in}\CPPic{omega2eq1b}\hspace{-.05in} \to \CPPicTailbox{omega2eq1c}\right).$$

Note that the tail of the first equation and the tail of the second equation
above are equal chain complexes. Also the left-hand sides of both equations
are in the ideal $\inpp{P_2}$ (definition \ref{ideal}), since they contain the second projector. These two observations taken together
imply that the equation
$$\CPPic{omega2eq1b} \cong \CPPicTailbox{omega2eq1c} = \CPPicTailbox{omega2eq2c} \cong \CPPic{omega2eq2b}$$

holds among $\inpp{P_2}$ cosets. For example, the first isomorphism holds since
the diagram on the left is a cone of the inclusion of the tail into an element of $\inpp{P_2}$. This says that a single strand placed on
the lefthand side of $\Omega_+$ is isomorphic to a single strand placed on
the righthand side of $\Omega_-$ as $\inpp{P_2}$ cosets and vice versa.
This equation together with its vertical reflection
imply that
$$\CPPic{omega2L} \cong \CPPic{omega2R}$$

as $\inpp{P_2}$ cosets, establishing the handle-slide property of $\Omega$.
\end{proof}

\subsection{Handle Slides and The Third Projector} \label{third projector section}

We start this section by recalling the minimal (in the sense that it is not homotopic to
a chain complex with fewer Temperley-Lieb diagrams) chain complex for
$P_3$, introduced in \cite{CK}. Then we show how it can be used to define an element $\Omega$ in the annular category
$\Kom^3(S^1\times [0,1])$ which is invariant under handle slides in the
associated coset category.

Recall that the third projector is given by

$$\begin{diagram}
\MPic{n3-1}\, & \rTo^A &\, q^{1}\left(\! \MPic{n3-e1}\,\, \oplus \MPic{n3-e2}\!\right) & \rTo^B &\, q^{2}\left(\! \MPic{n3-e1e2}\,\, \oplus \MPic{n3-e2e1}\!\right) & \rTo^C &\, q^{4}\left(\! \MPic{n3-e1e2}\,\, \oplus \MPic{n3-e2e1}\!\right) \\
  &  &           &   &         &   &  \dTo^D\\
\cdots & \lTo & \, q^{8}\left(\! \MPic{n3-e1e2}\,\, \oplus \MPic{n3-e2e1} \!\right) & \lTo^B & \, q^{7}\left(\! \MPic{n3-e1}\,\, \oplus \MPic{n3-e2}\!\right) & \lTo^E & \, q^{5}\left(\MPic{n3-e1}\,\, \oplus \MPic{n3-e2}\!\right) \\
\end{diagram}$$

Where $A = \left(\begin{array}{cc} \! -\MPic{n3-e1-sad} & \MPic{n3-e2-sad} \end{array}\right)^{\rm tr}$ and

\begin{align*}
B &= \left(\begin{array}{cc}
\MPic{n3-B-00} & - \MPic{n3-B-01}\\
-\MPic{n3-B-10} & \MPic{n3-B-11}\\
\end{array}
\right)  &
C &= \left(\begin{array}{cc}
\MPic{n3-e1e2-dot-u} + \MPic{n3-e1e2-dot-d} &  \MPic{n3-C-01}\\
\MPic{n3-C-10} & \MPic{n3-e2e1-dot-u} + \MPic{n3-e2e1-dot-d}\\
\end{array}
\right) & \\
D &= \left(\begin{array}{cc}
\MPic{n3-D-00} & - \MPic{n3-D-01}\\
-\MPic{n3-D-10} & \MPic{n3-D-11}\\
\end{array}
\right) &
E &= \left(\begin{array}{cc}
\MPic{n3-e2-dot-u} + \MPic{n3-e2-dot-d} &  \MPic{n3-E-01}\\
\MPic{n3-E-10} & \MPic{n3-e1-dot-u} + \MPic{n3-e1-dot-d}\\
\end{array}
\right) & \\
\end{align*}

After the initial identity term the complex becomes $4$ periodic.

\begin{theorem}{\cite{CK}}
  The definition of $P_3$ given above is a chain complex that satisfies the
  axioms of the universal projector.
\end{theorem}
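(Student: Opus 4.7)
The plan is to verify directly that the proposed complex (i) is indeed a chain complex, and (ii) satisfies the three defining properties of the universal projector recalled from \cite{CK}: positive grading with degree-zero differential, the identity diagram appearing exactly once in homological degree zero, and contractibility under turnbacks. The uniqueness clause of that theorem then identifies the complex up to homotopy with the universal projector $P_3$.

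The first two axioms and the grading conditions are immediate from inspection of the displayed diagram: the complex lives in nonnegative homological degrees, each arrow's $q$-shift matches the topological degree of its constituent cobordisms so the differentials have internal degree zero, and the identity diagram appears only in degree zero and only once. Verifying $d^2=0$ reduces to the matrix identities $BA=0$, $CB=0$, $DC=0$, $ED=0$, $BE=0$, $EB=0$ together with their $4$-periodic repetitions in the tail. Each entry of these $2\times 2$ products is computed in $\Cob(D^2,\hat{x})$ using the local relations of section \ref{categorified TL section}---sphere evaluations, the two-dot relation, and the neck-cutting identity. With these relations in hand, the signs built into $A$, $B$, and $D$ are seen to force pairwise cancellation of the resulting dotted and undotted cobordism terms.

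The substantive work is axiom (3), contractibility under turnbacks. For each turnback $T$ attached to one of the two adjacent pairs of strands at the top (and dually at the bottom), I would tensor $P_3$ with $T$ and construct an explicit null-homotopy $h_*$. After capping with the turnback, each basis element $e_i$ becomes, via the neck-cutting and sphere relations, either a diagram containing a nullhomotopic circle (which splits off through the $\varphi, \psi$ isomorphism of section \ref{categorified TL section}) or a diagram in which the turnback slides past to merge strands. A convenient organizing principle is that the resulting subcomplex isolates a grading-shifted copy of the tail of $P_2$, for which turnback-contractibility is already known by theorem \ref{secondproj thm}. The $4$-periodicity of the tail of $P_3$ propagates to the homotopy, so it suffices to define $h$ on one period plus the initial identity term and to verify $dh+hd=\Id$ in a finite number of homological degrees. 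The left-right symmetry exchanging $e_1 \leftrightarrow e_2$ then reduces the two turnback positions (1-2 and 2-3) to a single case.

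The principal obstacle is the careful bookkeeping of the $2\times 2$ matrix compositions of saddles and dotted cylinders in the periodic tail, where signs and dot placements must line up precisely with the differentials of $P_2$ after the turnback simplification. The reduction to the $P_2$ case via the subcomplex observation is what makes the computation tractable: once the turnback is applied, two of the three strands effectively collapse to the two-strand setting, and the periodic portion of $P_3$ maps onto the periodic portion of $P_2$ by an explicit chain map whose null-homotopy can be inherited from the one constructed in section \ref{second projector section}. What remains is to verify that the finitely many low-degree terms at the start of the complex, where the identity piece and the first saddle $A$ live, also admit a contracting homotopy; this can be done by hand in each of the two symmetric cases.
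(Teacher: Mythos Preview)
The present paper does not prove this theorem: it is quoted from \cite{CK}, where the explicit complex for $P_3$ was introduced and the axioms were verified. There is therefore no in-paper proof to compare your proposal against.

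Your overall strategy---check $d^2=0$ on the matrices, read off axioms (1) and (2), and build a periodic null-homotopy for the turnback compositions---is sound and is essentially what the verification in \cite{CK} amounts to. Two small corrections. First, in the sequence of differentials $A,B,C,D,E,B,C,D,E,\ldots$ the composite $EB$ never occurs; only $BA$, $CB$, $DC$, $ED$, and $BE$ must vanish. Second, and more substantively, your appeal to ``the tail of $P_2$, for which turnback-contractibility is already known by theorem \ref{secondproj thm}'' is not well-formed. That theorem asserts that $P_2$ is killed by turnbacks; its tail is not. Indeed, since $P_2=\Cone(1\to\text{tail})$ and $e_1\cdot P_2\simeq 0$, one has $e_1\cdot(\text{tail})\simeq e_1$ up to shift, which is not contractible. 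So the reduction you sketch does not produce a null-homotopy by transport from the $P_2$ case.

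What does work is the part of your plan that does not route through $P_2$: exploit the $4$-periodicity of the tail of $P_3$ to reduce $dh+hd=\Id$ on $e_i\cdot P_3$ to a finite computation over one period plus the initial terms, and carry that out directly. The $e_1\leftrightarrow e_2$ and top--bottom symmetries halve the work, as you note. Drop the $P_2$ heuristic and write the contracting homotopy explicitly on one period; that is the actual content of the verification.
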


We define the \emph{tail} of $P_3$ to be the chain complex

\newcommand{\ThreeTailbox}{\ensuremath{\Bigg[\!\MPic{n3-e1}\hspace{-.04in} \oplus\!\! \MPic{n3-e2}\! \rightarrow \! \MPic{n3-e1e2}\hspace{-.04in} \oplus\!\! \MPic{n3-e2e1}\!\Bigg]}}

$$\ThreeTailbox = \Cone\left(\MPic{p3box} \to \MPic{n3-1}\right)$$

so that

$$\ThreeTailbox \simeq q^{1}\left(\! \MPic{n3-e1}\,\, \oplus \MPic{n3-e2}\!\right) \to  q^{2}\left(\! \MPic{n3-e1e2}\,\, \oplus \MPic{n3-e2e1}\!\right) \to \cdots$$

i.e. the ``tail'' of the chain complex for $P_3$ given at the beginning of this section.

\begin{definition}{($\Omega_3$)} \label{omega3}
In the category $\Kom^3(S^1\times [0,1])$ let $\Omega_+$ be the object
consisting of two curves about the origin and $\Omega_-$ be the object
consisting of a single curve about the origin and a single nullhomotopic
simple closed curve. Let $\Omega = \Omega_+ \oplus \Omega_-$, we represent $\Omega$ by a curve
decorated with a dot then in terms of pictures. Graphically,
$$\Omega_+ = \CPPic{omega3p},\quad \Omega_{-} = \CPPic{omega3m}, \quad
\CPPic{omega} = \CPPic{omega3p} \oplus \CPPic{omega3m}.$$
\end{definition}

\begin{theorem} \label{omega3 thm}
The coset $\bar{\Omega} = \Omega + \inpp{P_3}$ associated to the object $\Omega$ defined above is invariant under handle slides.
\end{theorem}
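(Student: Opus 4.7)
The strategy closely parallels the proof of Theorem \ref{omega2 thm}, now using $P_3$ in place of $P_2$. From the minimal complex for $P_3$ recalled above one reads off a cone decomposition
$$P_3 \;\simeq\; \Cone\!\bigl(\MPic{n3-1} \to \ThreeTailbox\bigr),$$
and dually $\MPic{n3-1}$ is equivalent, up to a shift, to $\Cone(\ThreeTailbox \to P_3)$, exactly as in the $N=2$ setting. The plan is to close $P_3$ in two different ways around the annulus $S^1\times[0,1]$ so that the "identity part'' becomes $\Omega_+$ (two non-trivial parallel circles) in one closure and $\Omega_-$ (one non-trivial circle together with a nullhomotopic circle) in the other closure, while the "tail parts'' become a common annular chain complex $T$.

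The first step is to carry out these two partial closures explicitly and verify that the tails produced literally agree as chain complexes in $\Kom^3(S^1\times[0,1])$. Both resulting total complexes lie in $\inpp{P_3}$ by construction, since each is obtained by gluing $P_3$ into the annulus. Applying the two cone decompositions above, one then obtains two equations of the form
$$(\text{object in } \inpp{P_3}) \;=\; \Cone\bigl(\Omega_\pm\cdot s \to T\cdot s\bigr),$$
where $s$ denotes a single extra strand attached on the appropriate side. Since the tail $T\cdot s$ is the same in both equations and both total complexes lie in $\inpp{P_3}$, the argument used in Theorem \ref{omega2 thm} shows that $\Omega_+\cdot s$ is $\inpp{P_3}$-equivalent to $\Omega_-\cdot s$; that is, a strand placed on the left of $\Omega_+$ is isomorphic to a strand placed on the right of $\Omega_-$ as $\inpp{P_3}$ cosets, and vice versa. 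Combining this identity with its vertical reflection then gives the handle-slide equivalence: a strand running over $\Omega$ is $\inpp{P_3}$-equivalent to a strand running under $\Omega$ in the coset category, which is precisely invariance of $\bar\Omega$ under a handle slide.

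The main obstacle is the combinatorial bookkeeping needed to identify the two tails. Unlike $P_2$, whose tail is an alternating two-term complex, the tail of $P_3$ is four-periodic with two-dimensional summands, governed by the matrices $A,\dots,E$ listed above. After closing around the annulus, one must verify that two a priori different closures produce literally the same chain complex $T\cdot s$ rather than merely homotopy equivalent ones; this amounts to matching the closed-up annular diagrams associated to the tail generators and checking that the corresponding entries of the differentials agree, using the reflection symmetry of $P_3$ where necessary. Once this identification is in place, the coset-theoretic conclusion is formal, following the template of the proof of Theorem \ref{omega2 thm}.
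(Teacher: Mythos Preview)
Your proposal is correct and follows essentially the same approach as the paper: two annular closures of $P_3$ are compared via the cone decomposition into identity-plus-tail, the tails are identified, and the coset conclusion together with its vertical reflection yields handle-slide invariance. The only refinement worth noting is that in the paper the two tails are not literally equal but isomorphic ``up to changing the order of the summands'' (the closure swaps the roles of $e_1\leftrightarrow e_2$ and $e_1e_2\leftrightarrow e_2e_1$), which is exactly the reflection symmetry you allude to.
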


\begin{proof}
The proof consists of analyzing the partial traces of $P_3$ in the annulus.
\newcommand{\ThreeTailboxeqone}{\ensuremath{\Bigg[\!\CPPic{omega3eq1tailbox1}\hspace{-.04in} \oplus\!\! \CPPic{omega3eq1tailbox2}\! \rightarrow \! \CPPic{omega3eq1tailbox3}\hspace{-.04in} \oplus\!\! \CPPic{omega3eq1tailbox4}\!\Bigg]}}

$$\CPPic{omega3eq1a} = \CPPic{omega3eq1b}\hspace{-.03in}\to \ThreeTailboxeqone$$

where we omit the word $\Cone$ above to save horizontal space. Consider the partial trace,
\newcommand{\ThreeTailboxeqtwo}{\ensuremath{\Bigg[\!\CPPic{omega3eq2tailbox1}\hspace{-.04in} \oplus\!\! \CPPic{omega3eq2tailbox2}\! \rightarrow \! \CPPic{omega3eq2tailbox3}\hspace{-.04in} \oplus\!\! \CPPic{omega3eq2tailbox4}\!\Bigg]}}
$$\CPPic{omega3eq2a} = \CPPic{omega3eq2b}\hspace{-.03in}\to \ThreeTailboxeqtwo.$$

Notice that the two tails above are the same up to changing the order of the summands,

\newcommand{\MThreeTailboxeqone}{\ensuremath{\Bigg[\!\CPic{omega3eq1tailbox1}\hspace{.05in} \oplus\!\! \CPic{omega3eq1tailbox2}\! \rightarrow \! \CPic{omega3eq1tailbox3}\hspace{.05in} \oplus\!\! \CPic{omega3eq1tailbox4}\!\Bigg]}}
\newcommand{\MThreeTailboxeqtwo}{\ensuremath{\Bigg[\!\CPic{omega3eq2tailbox1}\hspace{.05in} \oplus\!\! \CPic{omega3eq2tailbox2}\! \rightarrow \! \CPic{omega3eq2tailbox3}\hspace{.05in} \oplus\!\! \CPic{omega3eq2tailbox4}\!\Bigg]}}
$$\MThreeTailboxeqone \cong \MThreeTailboxeqtwo$$

Since the left hand side of both equations above include the
third projector they are both elements of the ideal $\inpp{P_3}$. It follows
that
$$\CPPic{omega3eq1b} \cong \CPPic{omega3eq2b},$$

but this equation says that a free strand can be slid over $\Omega_+$ at
the cost of turning the $\Omega_+$ into an $\Omega_-$. This equation together with its vertical reflection
imply that the $\inpp{P_3}$ coset associated
to the $\Omega$ defined above is invariant under handle slides.
\end{proof}

\subsection{Relation to the magic element}\label{relationtomagic}
Here we compare the $\omega$ elements categorified in \ref{second projector
  section} and \ref{third projector section} to the special $\omega_N$ of
section \ref{rkh}. We use $X$ to represent a single essential circle in the
annulus and the relation $X \phi_k = \phi_{k+1} + \phi_{k-1}$ below.

In section \ref{second projector section} when $p_2 = 0$ we set $\omega = X
+ [2]$. It follows that $X \omega = X^2 + [2] X = \phi_2 + \phi_0 + [2]
\phi_1 = \omega_2$. So, $\omega_2 = X \omega$. The handle slide property
implies that $\omega_2 = [2]\omega$.

In section \ref{third projector section} when $p_3 = 0$ we set $\omega = X^2
+ [2] X$. If $\omega_3 = 1 + [2] \phi_1 + [3] \phi_2$ then $X\omega_3 =
([3]+1) X + [2] X^2$ since $X = X\phi_2$ when $p_3 = 0$ and $[3]+1=[2]^2
\Rightarrow X\omega_3 = [2] \omega$. Since $\omega_3$ satisfies the
handle slide property, $[2] \omega_3 = X \omega_3 = [2]\omega \Rightarrow \omega_3 =
\omega$ since $[2]$ is a unit in $\TL^3$ by construction.

\subsection{Spin manifolds} One may use
definitions \ref{omega2}, \ref{omega3} in the context of $3$-manifolds with
a spin structure. Specifically, one labels the components of the
characteristic sublink by ${\Omega}_-$ and the rest of the components by
${\Omega}_+$, compare with section \ref{spin}. Theorems \ref{omega2 thm},
\ref{omega3 thm} then state that sliding a strand interchanges the cosets
associated to ${\Omega}_-$ and ${\Omega}_+$, as one expects in the context
of spin Kirby calculus.

\subsection{Summary} This section introduced a categorification of the elements ${\omega}_2, {\omega}_3$, where ${\omega}_N$ is defined
in (\ref{omega eq}). It is shown that
these objects are invariant under handle slides, understood in the context
of the coset construction of section \ref{cosetcon}. The authors conjecture
that there is also a categorification of ${\omega}_N$ for all $N$, however
it is likely that a construction would require a further non-trivial
extension of ideas presented here.

One can construct an object in the brusque quotient category (section
\ref{cosetcon}) by labelling each component of a framed link $L\subset S^3$
with $\Omega$. Theorems \ref{omega2 thm} and \ref{omega3 thm} imply that the
isomorphism type of this object is unchanged by handle slides. This reflects
information about the 3-manifold obtained by surgery on $L$ (see section
\ref{kirby}). We expect that there are maps between such objects associated
to $4$-dimensional cobordisms.

{\bf Acknowledgements.} We would like to thank Nicholas Kuhn and Justin Roberts for helpful conversations on the subject of the paper.

B. Cooper was supported in part by the Max Planck Institute for Mathematics in Bonn.
V. Krushkal was supported in part by NSF grant DMS-1007342.

\end{document}